\documentclass[11pt]{article}
\usepackage{latexsym}
\usepackage{amscd}
\usepackage[all]{xy}
\usepackage[mathscr]{euscript}


\addtolength{\textwidth}{60pt}
\addtolength{\evensidemargin}{-30pt}
\addtolength{\oddsidemargin}{-30pt}

\usepackage[cp1251]{inputenc}
\usepackage[english]{babel}
\usepackage{amssymb,amsmath,amsthm,tikz,mathtools,mathrsfs}
\usepackage{graphicx,enumitem}
\usetikzlibrary{matrix,arrows,decorations.pathmorphing}

\flushbottom

\theoremstyle{definition}
\newtheorem{theorem}{Theorem}[section]
\newtheorem{proposition}[theorem]{Proposition}

\newtheorem{remark}[theorem]{Remark}
\newtheorem{definition}[theorem]{Definition}

\newcommand{\mc}[1]{\mathcal{#1}}

\numberwithin{equation}{section}

\begin{document}

\title{A note on fixed points of endofunctors}

\author{Aleksandr Luzhenkov}

\date{}
\maketitle

\begin{abstract}
In this note, we deal with the fixed points of an endofunctor $F: \mc C \longrightarrow \mc C$. Three classes of fixed points are introduced, and the case when $F$ is an endomorphism of a category with pretopology is investigated. We show existence of induced structures on the category of fixed points, and, when a pretopology is defined, give a characterization of fixed points in terms of sheaf cohomology on $\mc C$.
\end{abstract}

\setcounter{page}{1}

\section{Introduction}

Let $\mc C$ be a category, $F: \mc C \longrightarrow \mc C$ a functor. An object $X \in \mc C$ is called a fixed point of $F$ if there is an isomorphism $X \simeq F(X)$. For the first time, fixed points of a functor appeared in \cite{l}, where, among other things, an existence theorem was established and morphism of fixed points was introduced.

In this note, we give a few results concerning fixed points of a functor. We deal with three types of fixed points; the strict ones, the usual fixed points, and the cohomological fixed points. While the first and the second type are defined in terms of an identity morphism and an isomorphism, respectively, the third type is defined for a site morphism $F: \mc C \longrightarrow \mc C$ in terms of sheaf cohomology functors $H^{i}(X, \cdot): Sh(\mc C) \longrightarrow Ab, X \in \mc C$. Every strict fixed point is a fixed point, and (when we are working with a site morphism) every fixed point is the cohomological one.

In chapter 2, we deal with fixed points and strict fixed points. We introduce connection between strict fixed points of a functor and the nerve of a category. In chapter 3, we show that the category of fixed points $\mc{S}(F)$ of an additive endofunctor $F$ of an additive category is an additive category; if a functor $F: \mc C \longrightarrow \mc C$ is a site morphism, its category of fixed points $\mc{S}(F)$ may be endowed with the structure of a site. In chapter 4, using (co)slice categories, we give a criterion for a pair $(X, X \overset{\sigma}{\longrightarrow} F(X))$ to be a fixed point. In chapter 5, we give characterization of fixed points in terms of sheaf cohomology on a site, and introduce cohomological fixed points of a site endomorphism.

\vspace{5mm}

\underline{Convention}. Throughout the text, we suppose that a universe $\mc U$ is fixed. An element of $\mc U$ is called a $\mc U$-small set (or just a \emph{small set}). Category $\mc C$ is called small if $Ob(\mc C)$ is a small set and $Hom_{\mc C}(X,Y)$ is a small set for every pair of objects $X,Y \in Ob(\mc C)$. A ``category $\mc C$'' is always meant to be a ``small category $\mc C$''.

\section{Fixed points of endofunctor}

Let $\mc C$ be a category, $F: \mc C \longrightarrow \mc C$ a functor.

\begin{definition}

A pair $(X, \alpha)$ of an object $X \in Ob(\mc C)$ and an isomorphism $\alpha: X \widetilde{\longrightarrow} F(X)$ in $\mc C$ is called a fixed point of a functor $F: \mc C \longrightarrow \mc C$.

\end{definition}

A morphism between fixed points $(X, \alpha)$ and $(Y, \beta)$ is a morphism $f: X \longrightarrow Y$ in $\mc C$ such that the diagram

\begin{equation}
\begin{gathered}
\xymatrix@R=3pc@C=3pc{
  X \ar[d]_{f} \ar[r]^{\alpha} & F(X) \ar[d]^{F(f)}  \\
  Y \ar[r]^{\beta} & F(Y)             }
\end{gathered}
\end{equation}
is commutative. We will denote it as $(X, \alpha) \overset{f}{\longrightarrow} (Y, \beta)$.

\begin{definition}

A category $\mc{S}(F)$ of fixed points of a functor $F: \mc{C} \longrightarrow \mc{C}$ has fixed points $(X, \alpha)$ as objects and morphisms between objects given by the diagram (2.1).

\end{definition}

For every $F: \mc C \longrightarrow \mc C$ there is a faithful functor $for: \mc{S}(F) \longrightarrow \mc{C}$ which is defined by the rule

\begin{center}

$(X, \alpha) \mapsto X$,

$((X, \alpha) \overset{f}{\longrightarrow} (Y, \beta)) \mapsto (X \overset{f}{\longrightarrow} Y)$.

\end{center}

A category of fixed points $\mc{S}(F)$ is determined by the functor $F$. It might be the case that $\mc{S}(F)$ is an empty category --- a category without objects and without morphisms, or it may be equivalent to $\mc{C}$. In some cases, there is always at least one fixed point, as happens when  $\mc{C}$ is abelian and $F: \mc{C} \longrightarrow \mc{C}$ is additive. It's also clear that if $(X, \alpha)$ is a fixed point of $F$ then for every object $Y$ from the isomorphism class of objects $[X]$ there is an isomorphism $Y \widetilde{\longrightarrow} F(Y)$

\begin{proposition}

Let $F_{i}, F_{j}: \mc{C} \longrightarrow \mc{C}$ be isomorphic functors. Then categories $\mc{S}(F_{i})$ and $\mc{S}(F_{j})$ are isomorphic.

\end{proposition}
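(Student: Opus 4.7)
Let $\eta: F_i \xRightarrow{\sim} F_j$ be a natural isomorphism witnessing $F_i \simeq F_j$, with componentwise inverse $\eta^{-1}: F_j \xRightarrow{\sim} F_i$. The plan is to construct a functor $\Phi: \mc{S}(F_i) \longrightarrow \mc{S}(F_j)$ on the nose, and then use $\eta^{-1}$ to build a strict two-sided inverse.

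On objects I would set $\Phi(X,\alpha) = (X,\, \eta_X \circ \alpha)$; since both $\alpha: X \to F_i(X)$ and $\eta_X: F_i(X) \to F_j(X)$ are isomorphisms, the composite is again an isomorphism $X \to F_j(X)$, so this is a legitimate fixed point of $F_j$. On morphisms I would send $f: (X,\alpha) \to (Y,\beta)$ to the same underlying arrow $f: X \to Y$ in $\mc C$. The point to check is that the square (2.1) for $F_j$ with sides $\eta_X \circ \alpha$ and $\eta_Y \circ \beta$ commutes; this is exactly the naturality of $\eta$ applied to $f$, namely $F_j(f) \circ \eta_X = \eta_Y \circ F_i(f)$, combined with the commutativity of (2.1) for $F_i$. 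Functoriality of $\Phi$ (preservation of identities and composition) is immediate because $\Phi$ does not change the underlying morphism.

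The inverse $\Psi: \mc{S}(F_j) \longrightarrow \mc{S}(F_i)$ is constructed symmetrically from $\eta^{-1}$: send $(Y,\beta)$ to $(Y,\, \eta_Y^{-1} \circ \beta)$ and act as identity on morphisms. Then $\Psi \circ \Phi$ takes $(X,\alpha)$ to $(X,\, \eta_X^{-1} \circ \eta_X \circ \alpha) = (X,\alpha)$ strictly, and similarly $\Phi \circ \Psi = \mathrm{id}_{\mc{S}(F_j)}$; on morphisms both compositions are the identity since neither $\Phi$ nor $\Psi$ alters the underlying arrow. Hence $\mc{S}(F_i)$ and $\mc{S}(F_j)$ are isomorphic categories, not merely equivalent ones.

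The only real content is the verification that $\Phi$ sends a commutative square of type (2.1) to another such square, and that is forced by the naturality of $\eta$; everything else is bookkeeping. I do not anticipate any genuine obstacle.
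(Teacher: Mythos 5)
Your proposal is correct and follows essentially the same route as the paper: both define the functor $(X,\alpha)\mapsto(X,\eta_X\circ\alpha)$, acting as the identity on underlying arrows, verify the morphism condition via naturality of $\eta$, and take $(\eta^{-1})^{*}$ as the inverse. You are slightly more explicit than the paper in checking that the two composites are strict identities, but the content is identical.
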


\begin{proof}

Since $F_{i}$ and $F_{j}$ are isomorphic functors, there are natural transformations

\begin{center}

$\eta: F_{i} \longrightarrow F_{j}$,

$\eta^{-1}: F_{j} \longrightarrow F_{i}$,
\end{center}
such that $\eta^{-1} \circ \eta = Id_{F_{i}}, \eta \circ \eta^{-1} = Id_{F_{j}}$.

Assume that $(X_{1}, \alpha_{1})$ is a fixed point of $F_{i}$. Then we have an isomorphism

\begin{center}

$X_{1} \overset{\alpha_{1}}{\widetilde{\longrightarrow}} F_{i}(X_{1}) \overset{\eta_{X_{1}}}{\widetilde{\longrightarrow}} F_{j}(X_{1})$.

\end{center}

Therefore, for each fixed point $(X_{1}, \alpha_{1})$ of $\mc{S}(F_{i})$ we have a fixed point $(X_{1}, \eta_{X_{1}} \circ \alpha_{1})$ of $\mc{S}(F_{j})$.

Suppose there is a morphism of fixed points in $\mc{S}(F_{i})$ that is given by the diagram

\begin{equation}
\begin{gathered}
\xymatrix@R=3pc@C=3pc{
  X_{1} \ar[d]_{f} \ar[r]^{\alpha_{1}} & F_{i}(X_{1}) \ar[d]^{F_{i}(f)}  \\
  X_{2} \ar[r]^{\alpha_{2}} & F_{i}(X_{2})             }
\end{gathered}
\end{equation}

Then we have the morphism between $(X_{1}, \eta_{X_{1}} \circ \alpha_{1})$ and $(X_{2}, \eta_{X_{2}} \circ \alpha_{2})$, since the diagram

\begin{equation}
\begin{gathered}
\xymatrix@R=3pc@C=3pc{
  X_{1} \ar[d]_{f} \ar[r]^{\alpha_{1}} & F_{i}(X_{1}) \ar[d]^{F_{i}(f)} \ar[r]^{\eta_{X_{1}}} & F_{j}(X_{1}) \ar[d]^{F_{j}(f)}  \\
  X_{2} \ar[r]^{\alpha_{2}} & F_{i}(X_{2}) \ar[r]^{\eta_{X_{2}}} & F_{j}(X_{2})           }
\end{gathered}
\end{equation}
is commutative. We have a well-defined functor $(\eta)^{*}: \mc{S}(F_{i}) \longrightarrow \mc{S}(F_{j})$, given by

\begin{center}

$(X, \alpha) \mapsto (X, \eta_{X} \circ \alpha)$,

$((X_{1}, \alpha_{1}) \overset{f}{\longrightarrow} (X_{2}, \alpha_{2})) \mapsto ((X_{1}, \eta_{X_{1}} \circ \alpha_{1}) \overset{f}{\longrightarrow} (X_{2}, \eta_{X_{2}} \circ \alpha_{2}))$.
\end{center}
Functor $(\eta)^{*}$ has the inverse functor, namely $(\eta^{-1})^{*}$.

\end{proof}

Let us define a presheaf on $\mc{C}$ by the rule

\begin{equation*}
X \xmapsto{\hspace*{0.5cm}} \varinjlim_{X_{i} \in Ob(\mc{C})}Hom_{\mc{C}}(X,F(X_{i})).
\end{equation*}

\begin{proposition}
Let $F: \mc{C} \longrightarrow \mc{C}$ be a full functor, $(X,\alpha)$ a fixed point of $F$. There is a bijection
\begin{equation*}
\varinjlim_{X_{i} \in Ob(\mc{C})}Hom_{\mc{C}}(X,F(X_{i})) \simeq \{pt\},
\end{equation*}
where $\{pt\}$ is a one-point set.
\end{proposition}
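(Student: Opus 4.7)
The plan is to unwind the colimit in $\mathbf{Set}$ explicitly: its elements are equivalence classes of pairs $(X_i, f)$ with $f \in Hom_{\mc{C}}(X, F(X_i))$, under the equivalence relation generated by the identification $(X_i, f) \sim (X_j, F(h) \circ f)$ for every morphism $h: X_i \longrightarrow X_j$ in $\mc{C}$. The goal is then to show this set has exactly one element.

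First I would establish non-emptiness: the datum of the fixed point itself, namely $(X, \alpha: X \widetilde{\longrightarrow} F(X))$, supplies a distinguished element. So it suffices to show that every pair $(X_i, f)$ is equivalent to $(X, \alpha)$.

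For the uniqueness step, take an arbitrary $f: X \longrightarrow F(X_i)$ and form the composite
\begin{equation*}
f \circ \alpha^{-1}: F(X) \longrightarrow F(X_i),
\end{equation*}
which makes sense precisely because $\alpha$ is an isomorphism. Here is where the hypothesis enters: since $F$ is full, there exists a morphism $h: X \longrightarrow X_i$ in $\mc{C}$ with $F(h) = f \circ \alpha^{-1}$. Then $F(h) \circ \alpha = f$, so under the colimit relation $(X, \alpha) \sim (X_i, F(h) \circ \alpha) = (X_i, f)$, as desired.

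The argument is short and the main conceptual point, rather than an obstacle, is to notice that fullness is exactly what is needed to lift the morphism $f \circ \alpha^{-1}$ living in the image of $F$ back to $\mc{C}$; faithfulness of $F$ plays no role, since we never need uniqueness of $h$ — only that the resulting class in the colimit is well-defined, which it is by construction.
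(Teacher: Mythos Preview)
Your proof is correct and follows essentially the same approach as the paper: both arguments take an arbitrary $f: X \to F(X_i)$, compose with $\alpha^{-1}$ to get a morphism $F(X) \to F(X_i)$, and invoke fullness to lift it to some $h: X \to X_i$, thereby identifying the class of $f$ with the class of $\alpha$ in the colimit. Your version is slightly more explicit in unwinding the colimit as equivalence classes and in noting that faithfulness plays no role, but the core idea is identical.
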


\begin{proof}

Take an arbitrary morphism $\phi: X \longrightarrow F(X_{j})$ in $\mc{C}$. We are going to demonstrate that for a morphism $\phi$ there is always a commutative diagram

\begin{equation}
\begin{gathered}
\xymatrix@R=3pc@C=3pc{
  X \ar[dr]_{\alpha} \ar[rr]^{\phi} & & F(X_{j})  \\
  & F(X) \ar[ur]_{F(f)}  &     }
\end{gathered}
\end{equation}
for some $f: X \longrightarrow X_{j}$. Indeed, there is always a morphism $\phi \circ \alpha^{-1}: F(X) \longrightarrow F(X_{j})$ that makes the diagram above a commutative one. Since functor $F$ is full, $\phi \circ \alpha^{-1} = F(f)$ for some $f: X \longrightarrow X_{j}$.

On the other hand, existence of the diagram (2.4) is equivalent to the identity

\begin{equation}
\phi = Hom_{\mc{C}}(X,F(f))\alpha,
\end{equation} for some $f: X \longrightarrow X_{j}$. Therefore, the inductive limit $\varinjlim_{X_{i} \in Ob(\mc{C})}Hom_{\mc{C}}(X,F(X_{i}))$ is in bijection with a one-point set.

\end{proof}

Now we introduce a subclass of fixed points of the functor $F: \mc{C} \longrightarrow \mc{C}$.

\begin{definition}
An object $X \in Ob(\mc{C})$ is called a strict fixed point of the functor $F: \mc{C} \longrightarrow \mc{C}$ if $X = F(X)$.
\end{definition}

A strict fixed point of the functor $F$ is a fixed point of the form $(X, Id)$. In order to establish connection between strict fixed points of a functor and fixed points of continuous maps between topological spaces, we briefly recall the construction of the nerve of a small category (we follow \cite{gm}; all the details may be found there).

Let $\Delta$ be the category of finite sets and nondecreasing maps between them. A simplicial set $X$ is a functor $X: \Delta^{op} \longrightarrow Set$. We denote $X_{n} \overset{def}{=} X([n])$. A simplicial map between two simplicial sets $X$ and $X'$ is just a morphism of functors. The category of simplicial sets is denoted as $sSet$.

With every $X \in Ob(sSet)$ we associate the set

\begin{equation}
\coprod_{n=0}^{\infty}(\Delta_{n} \times X_{n})/ \sim,
\end{equation}
where $\Delta_{n}$ is usual topological $n$-simplex and $(s,x) \in \Delta_{n} \times X_{n} \sim (t,y) \in \Delta_{m} \times X_{m}$ if and only if $y = X(f)x, s = \Delta_{f}(t)$, for some nondecreasing map $f: [m] \longrightarrow [n]$. Here, $\Delta_{f}$ is the linear map $\Delta_{m} \longrightarrow \Delta_{n}$ that maps the vertex $v_{i} \in \Delta_{m}$ to the vertex $v_{f(i)} \in \Delta_{n}, i = 0,...,m$. The geometric realization $|X|$ of a simplicial set $X$ is the set (2.6) endowed with the weakest topology for which the factorization by $\sim$ is continuous.

A simplicial map $F: X \longrightarrow X'$ induces a continuous map $|F|: |X| \longrightarrow |X'|$. We introduce the map $F^{*}: \coprod_{n=0}^{\infty}(\Delta_{n} \times X_{n}) \longrightarrow \coprod_{n=0}^{\infty}(\Delta_{n} \times X'_{n})$ by the rule $F^{*}(s,x) = (s, F_{n}(x))$. Since $F^{*}$ maps equivalent points to equivalent ones, it induces a continuous map $|F|: |X| \longrightarrow |X'|$.

We define $N\mc{C}$ to be a simplicial set such that

$$(N\mc{C})_{n} = \{diagrams~ of~ the~ form~ X_{0} \overset{\phi_{0}}{\longrightarrow} X_{1} \overset{\phi_{1}}{\longrightarrow} ... \overset{\phi_{n-1}}{\longrightarrow} X_{n}, X_{i} \in Ob(\mc{C}), \phi_{i} \in Mor(\mc{C})\}.$$

The map $f^{*}: (N\mc{C})_{n} \longrightarrow (N\mc{C})_{m}$ which corresponds to $f: [m] \longrightarrow [n]$ is defined by the rule $f^{*}(X_{0} \overset{\phi_{0}}{\longrightarrow} X_{1} \overset{\phi_{1}}{\longrightarrow} ... \overset{\phi_{n-1}}{\longrightarrow} X_{n}) = (Y_{0} \overset{\psi_{0}}{\longrightarrow} Y_{1} \overset{\psi_{1}}{\longrightarrow} ... \overset{\psi_{n-1}}{\longrightarrow} Y_{n})$, where $Y_{i} = X_{f(i)}, \psi_{i} = Id$ if $f(i) = f(i+1)$ and $\psi_{i} = \psi_{f(i+1)-1} \circ ... \circ \psi_{f(i)}$ otherwise. If $F: \mc{C} \longrightarrow \mc{C'}$ is a functor then $NF: N\mc{C} \longrightarrow N\mc{C'}$ is the morphism of simplicial sets that maps a simplex $(X_{i},\phi_{i})$ into the simplex $(F(X_{i}),F(\phi_{i}))$. The geometric realization $|N\mc{C}|$ of simplicial set is called the nerve of the category $\mc{C}$. Along with triangulable topological space $|N\mc{C}|$, there is a continuous map $|NF|: |N\mc{C}| \longrightarrow |N\mc{C'}|$ as soon as there is a functor $F: \mc{C} \longrightarrow \mc{C'}$.

\begin{proposition}

Let $F: \mc{C} \longrightarrow \mc{C}$ be a functor. If $|NF|: |N\mc{C}| \longrightarrow |N\mc{C}|$ has a fixed point then functor $F$ has a strict fixed point.
\end{proposition}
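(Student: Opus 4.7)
The plan is to exploit the uniqueness of the ``canonical form'' of points of a geometric realization: every $p \in |X|$ of a simplicial set $X$ is represented by exactly one pair $(s,\sigma)$ in which $\sigma \in X_k$ is non-degenerate and $s$ is an interior point of $\Delta_k$. This is the standard consequence of the Eilenberg--Zilber lemma inside the Milnor construction (2.6), and it will be the essentially only non-trivial input; for $X = N\mc C$, non-degeneracy of $\sigma = (X_0 \overset{\phi_0}{\longrightarrow} \cdots \overset{\phi_{k-1}}{\longrightarrow} X_k)$ simply means that no $\phi_i$ is an identity.

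Given a fixed point $p = |NF|(p)$, I would choose its canonical representative $p = [(s, \sigma)]$, with $\sigma \in (N\mc C)_k$ non-degenerate and $s$ in the interior of $\Delta_k$. Directly from the definition of $|NF|$ one has $|NF|(p) = [(s, NF(\sigma))]$, where $NF(\sigma) = (F(X_0) \overset{F(\phi_0)}{\longrightarrow} \cdots \overset{F(\phi_{k-1})}{\longrightarrow} F(X_k))$. The simplex $NF(\sigma)$ need not be non-degenerate, so the next step is to decompose it as $NF(\sigma) = g^{\ast}(\tau)$, where $\tau \in (N\mc C)_m$ is non-degenerate with $m \le k$ and $g\colon [k] \to [m]$ is a non-decreasing surjection in $\Delta$.

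Applying the equivalence relation defining $|N\mc C|$ from (2.6) with $f = g$ yields $(s, NF(\sigma)) = (s, g^{\ast}(\tau)) \sim (\Delta_g(s), \tau)$. Because $g$ is surjective, the linear map $\Delta_g$ sends interior points of $\Delta_k$ to interior points of $\Delta_m$, so $(\Delta_g(s), \tau)$ is itself a canonical representative of the point $p$. Uniqueness of the canonical representative then forces $k = m$, $\sigma = \tau$, and $\Delta_g(s) = s$; since $g\colon [k] \to [k]$ is a non-decreasing surjection between sets of the same size, it must equal $\mathrm{id}_{[k]}$, and hence $NF(\sigma) = \sigma$. Reading off this equality term by term gives $F(X_i) = X_i$ and $F(\phi_i) = \phi_i$ for all $i$; in particular $X_0$ is a strict fixed point of $F$.

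The main obstacle is the uniqueness of the canonical representative of a point of $|N\mc C|$, which is where all the combinatorial content of the argument is concentrated; once it is granted (from \cite{gm} or via a direct Eilenberg--Zilber argument), the rest of the proof is a mechanical unwinding of the identification (2.6). An alternative would be to argue by induction on the dimension of the unique open cell of $|N\mc C|$ containing the fixed point $p$, but this essentially re-proves the same uniqueness statement.
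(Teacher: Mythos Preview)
Your argument is correct, and it is the rigorous version of the idea the paper is gesturing at. In the paper the point $p\in |N\mc C|$ is written as a sequence $([s_0,Y_0],[s_1,Y_1],\dots)$ with one entry in each simplicial degree, and the strict fixed point is extracted from the degree-$0$ term by observing that the only map $[0]\to[0]$ is the identity. Taken literally this assumes $p$ has a representative with $n=0$, i.e.\ that $p$ is a vertex of $|N\mc C|$; a fixed point lying in the interior of a higher cell has no such representative, so as printed the argument covers only that special case.

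Your proof is precisely the needed upgrade: instead of forcing the representative to live in degree~$0$, you take the unique non-degenerate interior representative $(s,\sigma)$ at whatever level $k$ it occurs, push $NF(\sigma)$ down along its Eilenberg--Zilber surjection $g$, and then let uniqueness of the canonical form force $m=k$, $g=\mathrm{id}$, $NF(\sigma)=\sigma$. The paper's computation is exactly your $k=0$ case (a $0$-simplex is automatically non-degenerate and the single point of $\Delta_0$ is interior), written as if it were the general one. What your route buys is an honest proof valid for an arbitrary fixed point of $|NF|$; what the paper's shortcut would buy, were it legitimate, is avoiding any appeal to the canonical-form/Eilenberg--Zilber statement.
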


\begin{proof}
Points of topological space $|N\mc{C}|$ have the form $p = ([s_{0},Y_{0}], [s_{1},Y_{1}], ... )$, where $(s_{i}, Y_{i}) \in \Delta_{i} \times X_{i}$ and equivalence classes are given by the relation $(s_{i}, Y_{i}) \sim (s_{j}, Y_{j}) \Longleftrightarrow \Delta_{f}(s_{j}) = s_{i}, Y_{j} = X(f)Y_{i}$, for some $f: [j] \longrightarrow [i]$.

If $|NF|(p) = p$ then, in particular, $[s_{0},Y_{0}] = [s_{0},F(Y_{0})]$. It means that $(s_{0},Y_{0}) \sim (s_{0},F(Y_{0}))$, which, in turn, means that there is a nondecreasing map $f: [0] \longrightarrow [0]$ such that $X(f)(Y_{0}) = F(Y_{0})$. But there is only one map from $[0]$ to $[0]$ and it's $Id$. Since $X(Id) = Id$, we arrive to the equality $Y_{0} = F(Y_{0})$.

\end{proof}

Suppose that $\mc{C}$ is compact category, i.e. $|N\mc{C}|$ is compact space. In this case, we may transfer some basic facts from topology.

\begin{proposition}

If $\mc{C}$ has the initial object there is a strict fixed point of $F: \mc{C} \longrightarrow \mc{C}$.
\end{proposition}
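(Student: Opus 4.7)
The plan is to reduce the statement to Proposition 2.6 by showing that the continuous map $|NF|: |N\mc{C}| \longrightarrow |N\mc{C}|$ admits a fixed point; once this is achieved, Proposition 2.6 immediately produces a strict fixed point of $F$.

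First, I would argue that $|N\mc{C}|$ is contractible. Let $0 \in Ob(\mc{C})$ be the initial object and let $c_{0}: \mc{C} \longrightarrow \mc{C}$ denote the constant functor sending every object to $0$ and every morphism to $Id_{0}$. The unique morphisms $0 \longrightarrow X$ indexed by $X \in Ob(\mc{C})$ assemble into a natural transformation $\eta: c_{0} \Longrightarrow Id_{\mc{C}}$. Since a natural transformation between two functors induces a simplicial homotopy between the associated maps of nerves (and thus a topological homotopy between their geometric realizations), $|Nc_{0}|$ is homotopic to $|NId_{\mc{C}}| = id_{|N\mc{C}|}$. As $|Nc_{0}|$ is the constant map at the $0$-simplex corresponding to the initial object, this shows that $|N\mc{C}|$ is contractible.

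Second, I would invoke a classical topological fixed-point principle. By the standing assumption, $\mc{C}$ is compact, so $|N\mc{C}|$ is a compact CW complex, hence a finite polyhedron, and the Lefschetz fixed-point theorem applies. Because $|N\mc{C}|$ is contractible, its rational homology is concentrated in degree $0$ with $H_{0}(|N\mc{C}|;\bb{Q}) = \bb{Q}$, and any continuous self-map acts as the identity on this group; consequently the Lefschetz number of any continuous self-map equals $1$, so every continuous self-map of $|N\mc{C}|$ has a fixed point. Applied to $|NF|$ this produces a point $p$ with $|NF|(p) = p$, and Proposition 2.6 then yields the desired strict fixed point of $F$.

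The only subtle ingredient is the passage ``natural transformation $\Rightarrow$ homotopy of nerves'', which is standard and can be extracted from the reference \cite{gm} already cited for the nerve construction; everything else is either definitional or an immediate application of Lefschetz. I do not expect any genuine obstacle, but if one wished to avoid Lefschetz, an entirely elementary alternative would be to explicitly write down a straight-line deformation of $|N\mc{C}|$ onto the vertex $0$ (using the cone structure on the nerve induced by the initial object) and then combine it with $|NF|$ to build a fixed point by hand.
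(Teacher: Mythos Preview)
Your proposal is correct and follows essentially the same approach as the paper: use the initial object to conclude that $|N\mc{C}|$ is contractible, invoke the Lefschetz Fixed Point Theorem (under the standing compactness assumption) to obtain a fixed point of $|NF|$, and then apply Proposition~2.6. The paper's proof is terser, simply asserting contractibility and Lefschetz without spelling out the natural-transformation-to-homotopy step or the Lefschetz-number computation that you include.
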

\begin{proof}
Since $\mc{C}$ has the initial object, the space $|N\mc{C}|$ is contractible. By Lefschetz Fixed Point Theorem, $|NF|: |N\mc{C}| \longrightarrow |N\mc{C}|$ has a fixed point. Therefore, by the Proposition 2.6., there is a strict fixed point of $F$.
\end{proof}

\begin{proposition}
Let $F,F': \mc{C} \longrightarrow \mc{C}$ be two functors, $\phi: F \longrightarrow F'$ a natural transformation. If $|NF|$ has a fixed point, then $F'$ has a strict fixed point.
\end{proposition}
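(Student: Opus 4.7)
The plan is to reduce the statement to Proposition 2.6 applied to the functor $F'$, by first converting the natural transformation $\phi$ into a topological piece of data relating $|NF|$ and $|NF'|$.

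The starting point is the well-known reinterpretation of a natural transformation as a functor: $\phi: F \to F'$ is equivalent to a functor $\Phi: \mc{C} \times [1] \to \mc{C}$, where $[1]$ is the arrow category $\{0 \to 1\}$. Taking nerves (which preserves products) and noting that $|N[1]| = [0,1]$, geometric realization produces a continuous map
\begin{equation*}
H: |N\mc{C}| \times [0,1] \longrightarrow |N\mc{C}|
\end{equation*}
with $H(-,0) = |NF|$ and $H(-,1) = |NF'|$. Thus $|NF|$ and $|NF'|$ are homotopic as self-maps of $|N\mc{C}|$. Applying Proposition 2.6 to the hypothesized fixed point of $|NF|$, I obtain a strict fixed point $Y_0$ of $F$, so that $F(Y_0) = Y_0$. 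Viewed in $|N\mc{C}|$, the vertex $Y_0$ is itself fixed by $|NF|$, and the component $\phi_{Y_0}: F(Y_0) = Y_0 \to F'(Y_0)$ realizes, under the nerve construction, to the edge $H(Y_0,-): [0,1] \to |N\mc{C}|$ connecting $Y_0$ to $F'(Y_0)$.

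The final step is to deduce that $|NF'|$ also has a fixed point, and then appeal again to Proposition 2.6 to extract a strict fixed point $Z$ of $F'$. This last step is the main obstacle: existence of a fixed point of a continuous self-map is not, in general, a homotopy-invariant property, so one cannot merely transport the fixed point of $|NF|$ across $H$ and declare victory. The intended argument must therefore exploit the extra structure provided by $\phi$ together with the hypothesis that the fixed point of $|NF|$ arises from a vertex $Y_0$ with $F(Y_0) = Y_0$. Two natural approaches are available: either (i) track the sequence of vertices $Y_0,\ F'(Y_0),\ F'^{2}(Y_0), \dots$ connected by the iterated $1$-simplices $F'^{n}(\phi_{Y_0})$, and exploit the geometric structure of $|N\mc{C}|$ (together with compactness, in the spirit of Propositions 2.7 and 2.8) to force some iterate to stabilize at a strict fixed point of $F'$; or (ii) combine the homotopy $H$ with a Lefschetz-type argument to produce a fixed point of $|NF'|$ directly and then invoke Proposition 2.6. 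In either route, the delicate point is converting the edge $\phi_{Y_0}$ and its $F'$-iterates into an honest equality $F'(Z) = Z$ of objects in $\mc{C}$.
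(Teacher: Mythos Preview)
The paper's proof is exactly your option (ii), stated in three lines: the natural transformation makes $|NF|$ and $|NF'|$ homotopic; then, invoking the standing compactness hypothesis on $|N\mc{C}|$ (declared just before Proposition 2.7) together with the Lefschetz Fixed Point Theorem, the paper asserts that $|NF'|$ has a fixed point; finally Proposition 2.6 yields a strict fixed point of $F'$. Your preliminary passage through a strict fixed point $Y_0$ of $F$ and the iterated-edge construction of option (i) are not used and are unnecessary for the paper's argument.

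Your hesitation about the Lefschetz step is well placed, however: the implication ``$|NF|$ has a fixed point $\Rightarrow \Lambda(|NF|)\neq 0$'' is false in general, so homotopy-invariance of the Lefschetz number alone does not force a fixed point of $|NF'|$. The paper elides this and simply writes ``from the hypothesis and Lefschetz Fixed Point Theorem.'' The gap you sensed is therefore real, but it lies in the paper's own argument rather than in your outline; once one accepts that step (or strengthens the hypothesis to $\Lambda(|NF|)\neq 0$), option (ii) is the entire proof, with no need for the $Y_0$-detour.
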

\begin{proof}
Since there is a natural transformation between functors, maps $|NF|$ and $|NF'|$ are homotopic. From the hypothesis and Lefschetz Fixed Point Theorem follows that $|NF'|$ has a fixed point. Therefore, by Proposition 2.6., $F'$ has a strict fixed point.
\end{proof}
\section{Induced structures on the category of fixed points}

If a category $\mc C$ has a certain structure, then, in some cases, the same structure may be defined on $\mc{S}(F)$ by using the functor $for: \mc{S}(F) \longrightarrow \mc C$.

If $\mc C$ is additive, or a site, then the same structures may be defined on $\mc{S}(F)$.

\begin{proposition}
Let $\mc C$ be an additive category and $F: \mc C \longrightarrow \mc C$ an additive functor. Then, category of fixed points $\mc{S}(F)$ is additive.
\end{proposition}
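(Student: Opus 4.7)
The plan is to verify the four additive-category axioms for $\mc{S}(F)$ --- abelian-group enriched hom-sets, bilinear composition, a zero object, and binary biproducts --- by transporting each piece of structure from $\mc C$ along the faithful functor $for: \mc{S}(F) \longrightarrow \mc C$, using the additivity of $F$ at every step to ensure compatibility with the isomorphisms $\alpha$.

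First I would address the enrichment and the zero object. For $(X,\alpha), (Y,\beta) \in \mc{S}(F)$, I define addition and negation in $Hom_{\mc{S}(F)}((X,\alpha),(Y,\beta))$ as those inherited from $Hom_{\mc C}(X,Y)$, and observe that the square (2.1) is preserved under sums: if $\beta \circ f = F(f) \circ \alpha$ and $\beta \circ g = F(g) \circ \alpha$, then additivity of $F$ combined with bilinearity of composition in $\mc C$ gives $\beta \circ (f+g) = F(f) \circ \alpha + F(g) \circ \alpha = F(f+g) \circ \alpha$, while the zero morphism satisfies the square trivially since $F$ sends zero morphisms to zero morphisms. Bilinearity of composition in $\mc{S}(F)$ then descends from $\mc C$ through $for$. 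For the zero object, additivity of $F$ forces $F(0)$ to be a zero object of $\mc C$, so there is a unique isomorphism $\alpha_{0}: 0 \widetilde{\longrightarrow} F(0)$, and $(0, \alpha_{0})$ serves as the zero object of $\mc{S}(F)$, because the unique morphisms $0 \to X$ and $X \to 0$ in $\mc C$ automatically fit into (2.1).

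For biproducts, given $(X,\alpha)$ and $(Y,\beta)$, I take the biproduct $X \oplus Y$ in $\mc C$ with injections $i_{X}, i_{Y}$ and projections $p_{X}, p_{Y}$. Additivity of $F$ transports the biproduct equations, so $F(X \oplus Y)$ together with $F(i_{X}), F(i_{Y}), F(p_{X}), F(p_{Y})$ is a biproduct of $F(X)$ and $F(Y)$ in $\mc C$, yielding a canonical isomorphism $\tau: F(X) \oplus F(Y) \widetilde{\longrightarrow} F(X \oplus Y)$. I then define $\alpha \oplus \beta: X \oplus Y \widetilde{\longrightarrow} F(X \oplus Y)$ as the unique map satisfying $F(p_{X}) \circ (\alpha \oplus \beta) = \alpha \circ p_{X}$ and $F(p_{Y}) \circ (\alpha \oplus \beta) = \beta \circ p_{Y}$, and propose $(X \oplus Y, \alpha \oplus \beta)$ as the biproduct of $(X,\alpha)$ and $(Y,\beta)$ in $\mc{S}(F)$. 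It remains to check that $i_{X}, i_{Y}, p_{X}, p_{Y}$ are morphisms of fixed points, and that, given any cospan $(X,\alpha) \leftarrow (Z,\sigma) \to (Y,\beta)$ in $\mc{S}(F)$, the mediating morphism $Z \to X \oplus Y$ supplied by the biproduct universal property in $\mc C$ is itself a morphism of fixed points.

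The main obstacle is this last compatibility, which reduces to a diagram chase combining the defining equations of $\alpha \oplus \beta$, the naturality of $\tau$, and the uniqueness part of the biproduct universal property in $\mc C$; once it is settled, faithfulness of $for$ transports the universal property from $\mc C$ to $\mc{S}(F)$ and completes the verification.
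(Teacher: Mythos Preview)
Your proposal is correct and follows essentially the same approach as the paper: transport the additive structure from $\mc{C}$ to $\mc{S}(F)$ via $for$, using additivity of $F$ to verify that sums, the zero morphism, the zero object, and the biproduct (with structure isomorphism $\sigma_{XY}^{-1}\circ(\alpha\oplus\beta)$, which is exactly your $\alpha\oplus\beta$) all live in $\mc{S}(F)$. The only cosmetic differences are that the paper spells out the check that $-f$ is a fixed-point morphism via a subtraction argument, and it defers the verification of the biproduct universal property to the analogous pullback construction later in the section, whereas you outline that check directly.
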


\begin{proof}

\underline{1}. A functor $F: \mc C \longrightarrow \mc C$ is additive if $F(0) \simeq 0$ and the canonical map $\sigma_{XY}: F(X \oplus Y) \longrightarrow F(X) \oplus F(Y)$ is an isomorphism for all $X,Y \in Ob(\mc{C})$. It means that $\mc{S}(F)$ contains zero object\footnote{Although, existence of zero object would follow from the statements 2-4 below.} $(0, \alpha_{0})$, where $\alpha_{0}$ is an isomorphism $0 \tilde{\longrightarrow} F(0)$.

\underline{2}. We should verify that $Hom_{\mc{S}(F)}((X, \alpha), (Y, \beta))$ is an abelian group. Suppose that $f,g: (X, \alpha) \longrightarrow (Y, \beta)$ are morphisms of fixed points

\begin{equation}
\begin{gathered}
\xymatrix@R=3pc@C=3pc{
  X \ar@<1ex>[d]^{g} \ar@<-1ex>[d]_{f} \ar[r]^-{\alpha} & F(X) \ar@<1ex>[d]^{F(g)} \ar@<-1ex>[d]_{F(f)}  \\
  Y \ar[r]^{\beta} & F(Y)          }
\end{gathered}
\end{equation}

Since category $\mc{C}$ is additive and so is the functor $F: \mc{C} \longrightarrow \mc{C}$, we have equalities

\begin{equation}
\beta \circ (f + g) = \beta \circ f + \beta \circ g,
\end{equation}
\begin{equation}
F(f + g) = F(f) + F(g).
\end{equation}
Taking into account that $f$ and $g$ are morphisms of fixed points, we see from (3.2) and (3.3) that the diagram

\begin{equation}
\begin{gathered}
\xymatrix@R=3pc@C=3pc{
  X \ar[d]_{f+g}  \ar[r]^-{\alpha} & F(X) \ar[d]^{F(f+g)}  \\
  Y \ar[r]^{\beta} & F(Y)          }
\end{gathered}
\end{equation}
 commutes. Hence, $f+g$ is a morphism of fixed points. This sum is bilinear and associative, since so is the sum of morphisms in $\mc{C}$.

\underline{3}. We have to show that for every $f \in Hom_{\mc{S}(F)}((X, \alpha), (Y, \beta))$ there exists $g \in Hom_{\mc{S}(F)}((X, \alpha), (Y, \beta))$ that is the inverse element for $f$.

First, there is always a zero morphism $0_{XY}$ between fixed points $(X, \alpha)$ and $(Y, \beta)$. Indeed, the identity $F(0_{XY}) \circ \alpha = \beta \circ 0_{XY}$ obviously holds. We take $g$ to be inverse to $f$ in $\mc{C}$. It remains to show that $g$ is a morphism of fixed points.

Since $g$ is inverse to $f$ in $\mc{C}$, there are two identities $\beta \circ f + \beta \circ g = 0_{XF(Y)}, F(f) \circ \alpha + F(g) \circ \alpha = 0_{XF(Y)}$. Therefore
\begin{equation}
\beta \circ f + \beta \circ g = F(f) \circ \alpha + F(g) \circ \alpha.
\end{equation}
Morphism $f$ is a fixed points morphism, which means that (3.5) is equivalent to the identity
$$\beta \circ f + \beta \circ g = \beta \circ f + F(g) \circ \alpha.$$

After all, we arrive to the relation $\beta \circ g = F(g) \circ \alpha$, and $g$ is a fixed points morphism. Hence, $Hom_{\mc{S}(F)}((X, \alpha), (Y, \beta))$ is an abelian group.

\underline{4}. It remains to demonstrate that $\mc{S}(F)$ admits sums $(X, \alpha) \oplus (Y, \beta)$.  Since $F$ is an additive functor, the canonical map $F(X \oplus Y) \overset{\sigma_{XY}}{\longrightarrow} F(X) \oplus F(Y)$ is an isomorphism. On the other hand, the canonical morphism $X \oplus Y \overset{\alpha \oplus \beta}{\longrightarrow} F(X) \oplus F(Y)$, which is the universal morphism for the coproduct in $\mc{C}$, is an isomorphism. Then, we set

$$(X, \alpha) \oplus (Y, \beta) \overset{def}{=}  (X \oplus Y, \sigma_{XY}^{-1} \circ (\alpha \oplus \beta)).$$

Defined sum $(X, \alpha) \oplus (Y, \beta)$ is indeed a coproduct in $\mc{S}(F)$; universal property holds, and all morphisms are fixed points morphisms (for the details, see below the construction of pullback in $\mc{S}(F)$; arguments here and there are the same).

\end{proof}

We're going to show that category of fixed points of a site\footnote{What we call a site was originally called a category with pretopology.} morphism has the induced structure of a site. Before doing so, we have to set two definitions.

\begin{definition}

A category $\mc{C}$\footnote{Hereinafter, we suppose that $\mc{C}$ has pullbacks.} is called a site if for every object $X \in \mc{C}$ there is family of sets of morphisms into $X$, which is denoted as $cov(X)$. Three axioms should hold.
\begin{enumerate}[label=(\roman*)]
\item  For every isomorphism $f: X_{i} \longrightarrow X$, the set $\{\overset{f}{X_{i} \longrightarrow X}\}$ is in $cov(X)$.

\item Suppose that $\sigma: Y \longrightarrow X$ is an arbitrary arrow in $\mc{C}$, and a set $\{X_{i} \longrightarrow X\}_{i \in A}$ is in $cov(X)$. Then, the set of morphisms $\{X_{i} \times_{X} Y \longrightarrow Y\}_{i \in A}$ is in $cov(Y)$.

\item If $\{X_{i} \longrightarrow X\}_{i \in A} \in cov(X)$ and $\{X_{ij} \longrightarrow X_{i}\}_{j \in B(i)} \in cov(X_{i})$, for all $i \in A$, then the set $\{X_{ij} \longrightarrow X_{i} \longrightarrow X\}_{i \in A, j \in B(i)}$ is an element of $cov(X)$.
\end{enumerate}
\end{definition}

\begin{definition}
Let $\mc{C}$ be a site. A functor $F: \mc{C} \longrightarrow \mc{C}$ is called a morphism of sites if two axioms hold.
\begin{enumerate}[label=(\roman*)]
\item For every $\mc{U} \in cov(X)$ the set $F(\mc{U})$ is the element of $cov(F(X))$, for all $X \in Ob(\mc{C})$.

\item Functor $F$ commutes with pullbacks, i.e. the canonical map

$$F(X_{1} \times_{X} X_{2}) \longrightarrow F(X_{1}) \times_{F(X)} F(X_{2})$$

is an isomorphism, for all $X, X_{1}, X_{2} \in Ob(\mc{C})$.
\end{enumerate}

\end{definition}

First of all, we define pullbacks in $\mc{S}(F)$. Suppose that $(X_{1}, \alpha_{1}), (X_{2}, \alpha_{2}), (X_{3}, \alpha_{3})$ are fixed points of $F$ and $f: (X_{2}, \alpha_{2}) \longrightarrow (X_{1}, \alpha_{1}), g: (X_{3}, \alpha_{3}) \longrightarrow (X_{1}, \alpha_{1})$ are fixed points morphisms. Since $\mc{C}$ has pullbacks, there is a commutative diagram

\begin{equation}
\begin{gathered}
\xymatrix@R=3pc@C=3pc{
  X_{2} \times_{X_{1}} X_{3} \ar[d]^-{p^{*}_{f}(g)} \ar[r]^-{p^{*}_{g}(f)} & X_{3} \ar[d]^{g}  \\
  X_{2} \ar[r]^{f} & X_{1}          }
\end{gathered}
\end{equation}

We'll show that A) there exists isomorphism $\sigma$ such that $(X_{2} \times_{X_{1}} X_{3}, \sigma)$ is a fixed point, B) morphism $p^{*}_{f}(g)$ is actually a fixed points morphism $p^{*}_{f}(g): (X_{2} \times_{X_{1}} X_{3}, \sigma) \longrightarrow (X_{2}, \alpha_{2})$ and morphism $p^{*}_{g}(f)$ is also a fixed points morphism, C) universal property holds.

\underline{A}. Let us consider the pullback diagram

\begin{equation}
\begin{gathered}
\xymatrix@R=3pc@C=3pc{
  F(X_{2}) \times_{F(X_{1})} F(X_{3}) \ar[d]^{g'} \ar[r]^-{f'} & F(X_{3}) \ar[d]^{F(g)}  \\
  F(X_{2}) \ar[r]^{F(f)} & F(X_{1})          }
\end{gathered}
\end{equation}

From the universal property for $X_{2} \times_{X_{1}} X_{3}$, there is the unique morphism, which we denote $\sigma_{1}^{-1}$, that makes the diagram
\begin{equation}
\begin{gathered}
\xymatrix@R=3pc@C=3pc{
  F(X_{2}) \times_{F(X_{1})} F(X_{3}) \ar@/^2.5pc/[drr]^{\alpha_{3}^{-1} \circ f'} \ar@/_1pc/[ddr]_-{\alpha_{2}^{-1} \circ g'} \ar@{.>}[dr]^-{\sigma_{1}^{-1}} & &   \\
  & X_{2} \times_{X_{1}} X_{3} \ar[r]^-{p^*_{g}(f)} \ar[d]^{p^*_{f}(g)} &  X_{3} \ar[d]_{g} \\
  & X_{2} \ar[r]^{f} & X_{1}                                                                     }
\end{gathered}
\end{equation}
commute (note that $f,g$ are fixed points morphisms and diagram (3.7) is commutative). From the universal property for $F(X_{2}) \times_{F(X_{1})} F(X_{3})$ follows the existence of the unique morphism $\sigma_{1}: X_{2} \times_{X_{1}} X_{3} \longrightarrow F(X_{2}) \times_{F(X_{1})} F(X_{3})$ that makes the appropriate diagram commute. Exploiting again universal property of pullback, we see that $\sigma_{1}^{-1} \circ \sigma_{1} = Id_{X_{2} \times_{X_{1}} X_{3}}$ and $\sigma_{1} \circ \sigma_{1}^{-1} = Id_{F(X_{2}) \times_{F(X_{1})} F(X_{3})}$.

Composing $\sigma_{1}$ with canonical isomorphism $\sigma_{2}^{-1}: F(X_{2}) \times_{F(X_{1})} F(X_{3}) \longrightarrow F(X_{2} \times_{X_{1}} X_{3})$, we get the desired isomorphism $\sigma = \sigma_{2}^{-1} \circ \sigma_{1} : X_{2} \times_{X_{1}} X_{3} \longrightarrow F(X_{2} \times_{X_{1}} X_{3})$.

\underline{B}. Here, we're going to demonstrate that $p^*_{f}(g)$ and $p^*_{g}(f)$ are morphisms of fixed points. Indeed, from the above constructions and the definition of canonical morphism $\sigma_{2}$ follows that the diagram

\begin{equation}
\begin{gathered}
\xymatrix@R=3pc@C=3pc{
  F(X_{2} \times_{X_{1}} X_{3}) \ar[dr]^{F(p^*_{f}(g))} & \\
  F(X_{2}) \times_{F(X_{1})} F(X_{3}) \ar[u]^{\sigma^{-1}_{2}} \ar[r]^-{g'} & F(X_{2})   \\
  X_{2} \times_{X_{1}} X_{3} \ar[r]^{p^*_{f}(g)} \ar[u]^{\sigma_{1}} & X_{2} \ar[u]^{\alpha_{2}}         }
\end{gathered}
\end{equation}
is a commutative one. Therefore, $p^*_{f}(g)$ is a fixed points morphism.

From the same reasons follows that $p^*_{g}(f)$ is also a morphism of fixed points.

\underline{C}. It remains to prove the universal property. Suppose that we are given a fixed point $(X', \alpha')$ and two morphisms of fixed points $(X', \alpha') \overset{k}{\longrightarrow} (X_{3}, \alpha_{3}), (X', \alpha') \overset{h}{\longrightarrow} (X_{2}, \alpha_{2})$ such that $g \circ k = f \circ h$. By the universal property of pullback in $\mc{C}$, there is the morphism $\Phi: X' \longrightarrow X_{2} \times_{X_{1}} X_{3}$, such that $p^*_{f}(g) \circ \Phi = h$ and $p^*_{g}(f) \circ \Phi = k$. We need to show that $\Phi$ is a morphism of fixed points; in other words, the rectangular in the diagram

\begin{equation}
\begin{gathered}
\xymatrix@R=3pc@C=3pc{
  & X' \ar@/_2.5pc/[ddl]^{h} \ar@/^1.5pc/@{.>}[ddr]^{k} \ar[d]^{\Phi} \ar[rr]^{\alpha'} & & F(X') \ar[d]^{F(\Phi)}       \\
  & X_{2} \times_{X_{1}} X_{3} \ar[dl]_{p^*_{f}(g)} \ar[dr]^{p^*_{g}(f)} \ar[rr]^{\sigma}  & & F(X_{2} \times_{X_{1}} X_{3})     \\
  X_{2} \ar[r]^{f} & X_{1} & X_{3} \ar[l]_{g} &          }
\end{gathered}
\end{equation}
should be commutative. To demonstrate that this is the case, we'll use universal property of pullback in $\mc{C}$. There is the unique morphism $\phi: F(X') \longrightarrow X_{2} \times_{X_{1}} X_{3}$ that makes the diagram below a commutative one

\begin{equation}
\begin{gathered}
\xymatrix@R=2pc@C=2pc{
  F(X') \ar@/^2.5pc/[drr]^{\alpha_{3}^{-1} \circ F(k)} \ar@/_1pc/[ddr]_-{\alpha_{2}^{-1} \circ F(h)} \ar@{.>}[dr]^-{\phi} & &   \\
  & X_{2} \times_{X_{1}} X_{3} \ar[r]^-{p^*_{g}(f)} \ar[d]^{p^*_{f}(g)} &  X_{3} \ar[d]_{g} \\
  & X_{2} \ar[r]^{f} & X_{1}                                                                     }
\end{gathered}
\end{equation}

First, we'll demonstrate that $\Phi \circ (\alpha')^{-1}: F(X') \longrightarrow X_{2} \times_{X_{1}} X_{3}$ makes the diagram (3.11) a commutative one. Indeed, we have the sequence of identities $p^*_{f}(g) \circ \Phi = h \Longleftrightarrow p^*_{f}(g) \circ \Phi \circ (\alpha')^{-1} = h \circ (\alpha')^{-1} \Longleftrightarrow p^*_{f}(g) \circ \Phi \circ (\alpha')^{-1} = \alpha_{2}^{-1} \circ F(h)$. The last implication holds since $h$ is the morphism of fixed points. By the same reasons, noting that $k$ is the morphism of fixed points, we have the identity $p^*_{g}(f) \circ \Phi \circ (\alpha')^{-1} = \alpha_{3}^{-1} \circ F(k)$. Hence, $\phi = \Phi \circ (\alpha')^{-1}$.

Now, we're going to show that $\sigma^{-1} \circ F(\Phi)$ makes the diagram (3.11) commute. We have the sequence of identities $F(p^*_{f}(g)) \circ F(\Phi) = F(h) \Longleftrightarrow \alpha_{2}^{-1} \circ F(p^*_{f}(g)) \circ F(\Phi) = \alpha_{2}^{-1} \circ F(h) \Longleftrightarrow p^*_{f}(g) \circ \sigma^{-1} \circ F(\Phi) = \alpha_{2}^{-1} \circ F(h)$. The last implication holds since $p^*_{f}(g)$ is the morphism of fixed points. Again, noting that $p^*_{g}(f)$ is the morphism of fixed points and repeating the same arguments, we arrive to the equality $p^*_{g}(f) \circ \sigma^{-1} \circ F(\Phi) = \alpha_{3}^{-1} \circ F(k)$. Therefore, by the uniqueness of the universal morphism, $\phi = \sigma^{-1} \circ F(\Phi)$.

After all, we have that $\Phi \circ (\alpha')^{-1} = \sigma^{-1} \circ F(\Phi)$, and $\Phi$ is the morphism of fixed points.

\vspace{5mm}

By using functor $for: \mc{S}(F) \longrightarrow \mc{C}$ it's possible to define the structure of a site on $\mc{S}(F)$. A set of fixed points morphisms $\{(X_{i}, \alpha_{i}) \longrightarrow (X, \alpha)\}_{i \in A}$ is in $cov((X, \alpha))$ if $for(\{(X_{i}, \alpha_{i}) \longrightarrow (X, \alpha)\}_{i \in A})$ is in $cov(X)$. It's clear that all three axioms hold.
\section{Fixed points and (co)slice categories}

In this section, we want to give conditions under which a pair $(X, X \overset{\sigma}{\longrightarrow} F(X))$ is a fixed point by using comma categories $\mc{C}/X$ and $X/\mc{C}$; in particular, we'll give a criterion for $(X, X \overset{\sigma}{\longrightarrow} F(X))$ to be a fixed point. Throughout this section, we suppose that $\mc{C}$ has pullbacks and pushouts.

If we are given a morphism $\sigma: X \longrightarrow Y$ in $\mc{C}$ then it's possible to associate with it the functor

\begin{center}

$\tau^{-1}(\sigma): \tau^{-1}(Y) \longrightarrow \tau^{-1}(X)$,

\end{center}
where $\tau^{-1}(Z) \overset{def}{=} \mc{C}/Z$. Let us briefly recall this construction.

For every $g_{1}: Y_{1} \longrightarrow Y$ there is a commutative square

\begin{equation}
\begin{gathered}
\xymatrix@R=3pc@C=3pc{
  X \times_{Y} Y_{1} \ar[d]_{p^{*}_{\sigma}(g_{1})} \ar[r]^-{p^{*}_{g_{1}}(\sigma)} & Y_{1} \ar[d]^{g_{1}}  \\
  X \ar[r]^{\sigma} & Y          }
\end{gathered}
\end{equation}

Then, assuming the axiom of choice, we define the map $\tau^{-1}(\sigma): Ob(\mc{C}/Y) \longrightarrow Ob(\mc{C}/X)$ by the rule

\begin{center}

$(Y_{1} \overset{g_{1}}{\longrightarrow} Y) \mapsto (X \times_{Y} Y_{1} \overset{p^{*}_{\sigma}(g_{1})}{\xrightarrow{\hspace*{1cm}}} X)$.

\end{center}

Let $\eta_{12}$ be a morphism between $Y_{1} \overset{g_{1}}{\longrightarrow} Y$ and $Y_{2} \overset{g_{2}}{\longrightarrow} Y$ that is given by the commutative triangle

\begin{equation}
\begin{gathered}
\xymatrix@R=3pc@C=3pc{
  Y_{1} \ar[d]_{\eta_{12}} \ar[r]^{g_{1}} & Y  \\
  Y_{2} \ar[ur]_{g_{2}}   &     }
\end{gathered}
\end{equation}

We have to define a morphism $\eta_{12}^{*}: X \times_{Y} Y_{1} \longrightarrow X \times_{Y} Y_{2}$ such that the diagram

\begin{equation}
\begin{gathered}
\xymatrix@R=3pc@C=3pc{
  X \times_{Y} Y_{1} \ar[d]_{\eta_{12}^{*}} \ar[r]^-{p^*_{\sigma}(g_{1})} & X  \\
  X \times_{Y} Y_{2} \ar[ur]_{p^*_{\sigma}(g_{2})}   &     }
\end{gathered}
\end{equation}
is commutative.

Morphism $\eta_{12}^{*}$ will be extracted from the universal property of pullback. Indeed, we have the commutative diagram

\begin{equation}
\begin{gathered}
\xymatrix@R=3pc@C=3pc{
  X \times_{Y} Y_{1}  \ar@{.>}[dr]_{\Psi} \ar[rr]^{p^*_{g_1}(\sigma)} \ar@/_1pc/[ddr]_{p^*_{\sigma}(g_1)} & {} & Y_{1} \ar[dr]^{\eta_{12}} \ar@/^4pc/[ddr]^{g_1} & {}  \\
   & X \times_{Y} Y_{2} \ar[rr]^{p^*_{g_2}(\sigma)} \ar[d]^{p^*_{\sigma}(g_{2})}  & & Y_{2}   \ar[d]_{g_{2}}   \\
   & X \ar[rr]^{\sigma}   &   & Y  }
\end{gathered}
\end{equation}
where $\Psi$ is the universal morphism. We take $\eta^*_{12} = \Psi$.

Functor $\tau^{-1}(\sigma): \tau^{-1}(Y) \longrightarrow \tau^{-1}(X)$, therefore, is defined on objects and morphisms by the rule

\begin{center}

$(Y_{1} \overset{g_{1}}{\longrightarrow} Y) \mapsto (X \times_{Y} Y_{1} \overset{p^{*}_{\sigma}(g_{1})}{\xrightarrow{\hspace*{1cm}}} X)$,

\vspace{5mm}

$\begin{gathered}
\xymatrix@R=3pc@C=3pc{
  Y_{1} \ar[d]_{\eta_{12}} \ar[r]^{g_{1}} & Y  \\
  Y_{2} \ar[ur]_{g_{2}}   &     }
\end{gathered}$ $\mapsto$ $\begin{gathered}
\xymatrix@R=3pc@C=3pc{
  X \times_{Y} Y_{1} \ar[d]_{\eta_{12}^{*}} \ar[r]^-{p^*_{\sigma}(g_{1})} & X  \\
  X \times_{Y} Y_{2} \ar[ur]_{p^*_{\sigma}(g_{2})}   &     }
\end{gathered}$

\end{center}

Axioms of an identity morphism and composition are verified by the universal property of pullback; here, we omit details.

Of course, there is a dual construction: over-categories $\mc{C}/X$ are replaced by under categories $X/\mc{C}$, pullbacks are replaced by pushouts. Hence, for every morphism $\sigma: X \longrightarrow Y$ in $\mc{C}$ there is a well-defined functor $s^{-1}(\sigma): s^{-1}(X) \longrightarrow s^{-1}(Y)$, where $s^{-1}(Z) \overset{def}{=} Z/\mc{C}$.

\begin{proposition}

If $\sigma: X \longrightarrow Y$ is an isomorphism, functors

\begin{center}

$\tau^{-1}(\sigma): \tau^{-1}(Y) \longrightarrow \tau^{-1}(X)$

$s^{-1}(\sigma): s^{-1}(X) \longrightarrow s^{-1}(Y)$\end{center}
are equivalences of categories

\end{proposition}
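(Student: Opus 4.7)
The plan is to reduce $\tau^{-1}(\sigma)$ to a visibly invertible functor by exhibiting a natural isomorphism to the ``composition with $\sigma^{-1}$'' functor, and then handle the coslice case dually.

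First, I would define an auxiliary functor $C_{\sigma}: \mc{C}/Y \longrightarrow \mc{C}/X$ acting on objects by $(g_{1}: Y_{1} \longrightarrow Y) \mapsto (\sigma^{-1} \circ g_{1}: Y_{1} \longrightarrow X)$ and on morphisms $\eta_{12}$ by sending the triangle over $Y$ to the same underlying arrow $\eta_{12}: Y_{1} \longrightarrow Y_{2}$, which is now a morphism over $X$ because $(\sigma^{-1} \circ g_{2}) \circ \eta_{12} = \sigma^{-1} \circ g_{1}$. Since $\sigma^{-1}$ is also an isomorphism, $C_{\sigma^{-1}}$ is a strict two-sided inverse to $C_{\sigma}$, so $C_{\sigma}$ is an isomorphism of categories.

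Next, I would construct a natural isomorphism $\nu: \tau^{-1}(\sigma) \xrightarrow{\sim} C_{\sigma}$. For each $(Y_{1},g_{1})$, set $\nu_{(Y_{1},g_{1})} := p^{*}_{g_{1}}(\sigma): X \times_{Y} Y_{1} \longrightarrow Y_{1}$. The fact that this is an isomorphism in $\mc{C}$ is a one-line verification from the universal property of the pullback: $(\sigma^{-1} \circ g_{1}, \mathrm{Id}_{Y_{1}})$ induces an inverse to $(p^{*}_{\sigma}(g_{1}), p^{*}_{g_{1}}(\sigma))$. The commutativity of square (4.1) rewritten as $\sigma^{-1} \circ g_{1} \circ p^{*}_{g_{1}}(\sigma) = p^{*}_{\sigma}(g_{1})$ shows that $\nu_{(Y_{1},g_{1})}$ is indeed a morphism in $\mc{C}/X$ from $(X \times_{Y} Y_{1}, p^{*}_{\sigma}(g_{1}))$ to $(Y_{1}, \sigma^{-1}\circ g_{1})$. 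Naturality in $\eta_{12}$ amounts to the identity $\eta_{12} \circ p^{*}_{g_{1}}(\sigma) = p^{*}_{g_{2}}(\sigma) \circ \eta_{12}^{*}$, which is precisely the commutativity of the upper rectangle in diagram (4.4) that defined $\eta_{12}^{*}$ in the first place. Thus $\tau^{-1}(\sigma)$ is naturally isomorphic to the isomorphism $C_{\sigma}$, so it is an equivalence of categories.

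The statement for $s^{-1}(\sigma): X/\mc{C} \longrightarrow Y/\mc{C}$ follows by the dual argument: replace $\mc{C}/Z$ by $Z/\mc{C}$, pullbacks by pushouts, and ``composition with $\sigma^{-1}$'' by ``composition with $\sigma^{-1}$ on the right''. The only input needed is the dual fact that a pushout of an isomorphism is an isomorphism, which is again immediate from the universal property. I do not expect any real obstacle here; the whole content of the proposition is the observation that pullback/pushout along an iso is an iso, plus bookkeeping of which diagrams commute in which slice. The only mildly delicate point is being careful that $\nu_{(Y_{1},g_{1})}$ is presented as an arrow in $\mc{C}/X$ (not in $\mc{C}/Y$), which is where one uses $\sigma^{-1}$ explicitly.
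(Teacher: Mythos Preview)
Your argument is correct. The route differs from the paper's, though: the paper simply declares $\tau^{-1}(\sigma^{-1})$ to be a quasi-inverse of $\tau^{-1}(\sigma)$, invoking that pullbacks preserve isomorphisms and citing the pasting lemma to conclude $\tau^{-1}(\sigma)\circ\tau^{-1}(\sigma^{-1})\simeq\tau^{-1}(\mathrm{Id}_Y)\simeq\mathrm{Id}_{\tau^{-1}(Y)}$ (and symmetrically), then dualizes. You instead introduce the composition functor $C_\sigma$, which is a \emph{strict} isomorphism of categories, and build an explicit natural isomorphism $\nu:\tau^{-1}(\sigma)\Rightarrow C_\sigma$ using the pullback projection $p^{*}_{g_1}(\sigma)$. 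Your version is more self-contained---it never needs the pasting lemma and all commutativities are read off directly from diagrams (4.1) and (4.4)---while the paper's version is shorter and closer in spirit to ``$\tau^{-1}$ is a pseudofunctor''. Both rest on the same underlying fact (pullback/pushout of an isomorphism is an isomorphism), so the difference is organizational rather than mathematical.
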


\begin{proof}
Since pullbacks preserve isomorphisms (but not identities) and pasting-lemma holds, functor $\tau^{-1}(\sigma)$ has the quasi-inverse, namely the functor $\tau^{-1}(\sigma^{-1})$. The same argument applies to pushouts and functor $s^{-1}(\sigma)$.

\end{proof}

Besides functors $\tau^{-1}(\sigma)$ and $s^{-1}(\sigma)$, there are also two functors for every morphism $\newline
\sigma: X \longrightarrow Y$ in $\mc{C}$. The first one is $\overline{\sigma}: \tau^{-1}(X) \longrightarrow \tau^{-1}(Y)$, which is defined on objects by the post-composition
\begin{center}
$(X_{1} \overset{f_{1}}{\longrightarrow} X) \mapsto (X_{1} \overset{f_{1}}{\longrightarrow} X \overset{\sigma}{\longrightarrow} Y)$,
\end{center}
and the second one is $\tilde{\sigma}: s^{-1}(Y) \longrightarrow s^{-1}(X)$, which is defined on objects by the pre-composition
\begin{center}
$(Y \overset{g_{1}}{\longrightarrow} Y_{1}) \mapsto (X \overset{\sigma}{\longrightarrow} Y \overset{g_{1}}{\longrightarrow} Y_{1})$.
\end{center}

The following result is well-known.

\begin{proposition}

Let $\sigma: X \longrightarrow Y$ be a morphism in $\mc{C}$.

\begin{enumerate}[label=(\roman*)]
\item Functor $\tau^{-1}(\sigma)$ is right adjoint to the functor $\overline{\sigma}$.
\item Functor $s^{-1}(\sigma)$ is left adjoint to the functor $\tilde{\sigma}$.
\end{enumerate}

\end{proposition}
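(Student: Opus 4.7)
The plan is to establish both adjunctions by producing natural bijections of Hom-sets and reading them off from the universal property of the (co)limit in question. Only (i) needs to be argued in detail; (ii) will be dual.

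For (i), fix objects $(X_{1} \overset{f_{1}}{\longrightarrow} X) \in \tau^{-1}(X)$ and $(Y_{1} \overset{g_{1}}{\longrightarrow} Y) \in \tau^{-1}(Y)$. By definition, a morphism $\overline{\sigma}(f_{1}) \longrightarrow g_{1}$ in $\tau^{-1}(Y)$ is an arrow $h: X_{1} \longrightarrow Y_{1}$ in $\mc{C}$ with $g_{1} \circ h = \sigma \circ f_{1}$. On the other side, a morphism $f_{1} \longrightarrow \tau^{-1}(\sigma)(g_{1})$ in $\tau^{-1}(X)$ is an arrow $k: X_{1} \longrightarrow X \times_{Y} Y_{1}$ with $p^{*}_{\sigma}(g_{1}) \circ k = f_{1}$. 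I would then invoke the universal property of the pullback $X \times_{Y} Y_{1}$: maps $k: X_{1} \longrightarrow X \times_{Y} Y_{1}$ are in bijection with pairs $(a,h)$, $a: X_{1} \longrightarrow X$ and $h: X_{1} \longrightarrow Y_{1}$, satisfying $\sigma \circ a = g_{1} \circ h$. The slice condition $p^{*}_{\sigma}(g_{1}) \circ k = f_{1}$ pins $a = f_{1}$, so $k \leftrightarrow h$ where $h$ is exactly a morphism as above. This yields the desired bijection $\Phi_{f_{1},g_{1}}: \mathrm{Hom}_{\tau^{-1}(Y)}(\overline{\sigma}(f_{1}),g_{1}) \simeq \mathrm{Hom}_{\tau^{-1}(X)}(f_{1},\tau^{-1}(\sigma)(g_{1}))$.

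Next, I would check naturality in both variables. For naturality in $g_{1}$, given $\eta_{12}: (g_{1}) \longrightarrow (g_{2})$ in $\tau^{-1}(Y)$, post-composition with $\eta_{12}$ on the left corresponds, under the universal property, to post-composition with $\eta_{12}^{*}$ on the right — this is precisely the content of diagram (4.4) used in the construction of $\tau^{-1}(\sigma)$ on morphisms. For naturality in $f_{1}$, a morphism $\xi: (f_{1}') \longrightarrow (f_{1})$ in $\tau^{-1}(X)$ acts by pre-composition on both sides, and compatibility is immediate. This produces the adjunction $\overline{\sigma} \dashv \tau^{-1}(\sigma)$.

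For (ii), I would run the exactly dual argument: replace $X \times_{Y} Y_{1}$ by the pushout $X \sqcup_{X} \cdots$ (namely $s^{-1}(\sigma)(g_{1}) = Y \sqcup_{X} Y_{1}$ type construction, reversing arrows), use the universal property of the pushout to match morphisms $\tilde{\sigma}(g_{1}) \longrightarrow f_{1}$ in $s^{-1}(X)$ with morphisms $g_{1} \longrightarrow s^{-1}(\sigma)(f_{1})$ in $s^{-1}(Y)$, and verify naturality the same way.

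The only mildly delicate point is bookkeeping: one must keep track of which triangles must commute in the slice categories versus which squares must commute in $\mc{C}$, and ensure that the bijection produced by the universal property respects the slice-category constraints. Once the correspondence $k \leftrightarrow h$ above is set up, everything else is formal, so I do not expect any serious obstacle — this is essentially the standard "base change is right adjoint to post-composition" argument.
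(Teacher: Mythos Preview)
Your proposal is correct and follows essentially the same route as the paper: both arguments read the bijection of Hom-sets directly off the universal property of the pullback (the paper's maps $\beta$ and $\delta$ are exactly your $k \mapsto h = p^{*}_{g_{1}}(\sigma) \circ k$ and its inverse via the universal morphism), and both dispatch (ii) by duality. Your write-up is, if anything, slightly more explicit about naturality than the paper's.
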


\begin{proof}

(i). Let $X_{1} \overset{f_{1}}{\longrightarrow} X$ be an object of $\tau^{-1}(X)$, $Y_{1} \overset{g_{1}}{\longrightarrow} Y$ an object of $\tau^{-1}(Y)$. We are going to construct maps between $Hom$-sets. First, we construct

\begin{equation*}
Hom_{\tau^{-1}(X)}(X_{1} \overset{f_{1}}{\longrightarrow} X, \tau^{-1}(\sigma)(Y_{1} \overset{g_{1}}{\longrightarrow} Y)) \overset{\beta}{\longrightarrow} Hom_{\tau^{-1}(Y)}(\overline{\sigma}(X_{1} \overset{f_{1}}{\longrightarrow} X), Y_{1} \overset{g_{1}}{\longrightarrow} Y).
\end{equation*}

Suppose that $\phi: X_{1} \longrightarrow X \times_{Y} Y_{1}$ is an element of $Hom_{\tau^{-1}(X)}(X_{1} \overset{f_{1}}{\longrightarrow} X, \tau^{-1}(\sigma)(Y_{1} \overset{g_{1}}{\longrightarrow} Y))$. Then $\beta$ is defined by the post-composition

\begin{equation*}
\beta: \phi \mapsto p^{*}_{g_{1}}(\sigma) \circ \phi
\end{equation*}
It's straightforward to see that all necessary triangles commute and the map is correctly defined.

Now, a map $\delta$, that goes in the converse direction, will be defined. Suppose we are given a morphism $\alpha: X_{1} \longrightarrow Y_{1}$ that is an element of \begin{center}
$Hom_{\tau^{-1}(Y)}(\overline{\sigma}(X_{1} \overset{f_{1}}{\longrightarrow} X), Y_{1} \overset{g_{1}}{\longrightarrow} Y)$.
\end{center}
Then, we have the universal morphism $\alpha'$ and the commutative diagram

\begin{equation}
\begin{gathered}
\xymatrix@R=3pc@C=3pc{
  X_{1} \ar@/^2.5pc/[drr]^{\alpha} \ar@/_1pc/[ddr]_-{f_{1}} \ar@{.>}[dr]^-{\alpha'} & &   \\
  & X \times_{Y} Y_{1} \ar[r]^-{p^*_{g_{1}}({\sigma})} \ar[d]^{p^*_{\sigma}(g_{1})} &  Y_{1} \ar[d]_{g_{1}} \\
  & X \ar[r]^{\sigma} & Y                                                                     }
\end{gathered}
\end{equation}

The map $\delta$ is defined by the rule

\begin{equation*}
\delta: \alpha \mapsto \alpha'
\end{equation*}

It's easy to see that $\beta \circ \delta$ and $\delta \circ \beta$ are identity maps. It's also quite straightforward to show that defined maps are natural: one should just keep in mind the uniqueness of the universal morphism.

\vspace{5mm}

(ii). The proof goes as in (i).
\end{proof}

\begin{proposition}

Suppose that $\mc{C}$ is a balanced category\footnote{Recall that a category $\mc{C}$ is called balanced if every morphism that is an epimorphism and monomorphism is also an isomorphism. Among examples of balanced categories are categories of groups, $R$-modules, rings, abelian categories.}. Then $(X, X \overset{\sigma}{\longrightarrow} F(X))$ is a fixed point if and only if functors $\tau^{-1}(\sigma): \tau^{-1}(F(X)) \longrightarrow \tau^{-1}(X)$ and $s^{-1}(\sigma): s^{-1}(X) \longrightarrow s^{-1}(F(X))$ are equivalences of categories.
\end{proposition}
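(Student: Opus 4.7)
The ($\Rightarrow$) direction is immediate from Proposition 4.1. For the converse, the plan is to deduce from each of the two equivalences one ``half'' of the statement that $\sigma$ is an isomorphism---a monomorphism from $\tau^{-1}(\sigma)$ being an equivalence, an epimorphism from $s^{-1}(\sigma)$ being an equivalence---and then invoke the balanced hypothesis to finish.

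The preliminary observation is that the adjoint of an equivalence of categories is again an equivalence: any quasi-inverse of an equivalence serves simultaneously as a left and right adjoint, so by uniqueness of adjoints up to natural isomorphism, the left adjoint $\overline{\sigma}$ of $\tau^{-1}(\sigma)$ (Proposition 4.2(i)) is itself an equivalence, in particular fully faithful; dually, the right adjoint $\tilde{\sigma}$ of $s^{-1}(\sigma)$ (Proposition 4.2(ii)) is fully faithful.

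I then derive that $\sigma$ is a monomorphism from $\overline{\sigma}$ being fully faithful. Given arrows $a, b: Z \longrightarrow X$ with $\sigma \circ a = \sigma \circ b$, I look at the objects $(Z, a)$ and $(X, Id_{X})$ of $\tau^{-1}(X)$: the hom-set between them consists of the single element $a$, while after applying $\overline{\sigma}$ the hom-set $Hom_{\tau^{-1}(F(X))}((Z, \sigma a), (X, \sigma)) = \{g: Z \to X \mid \sigma g = \sigma a\}$ also contains $b$. Fullness of $\overline{\sigma}$ then forces $b = a$. The dual argument in the coslice category $s^{-1}$, using $\tilde{\sigma}$ fully faithful and the pair $(F(X), Id_{F(X)}), (W, a)$ for arrows $a, b: F(X) \to W$ with $a \sigma = b \sigma$, shows that $\sigma$ is an epimorphism. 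Since $\mc{C}$ is balanced, $\sigma$ is an isomorphism and $(X, \sigma)$ is a fixed point.

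The only step requiring real care is the reduction from ``equivalence'' to the fully faithfulness of the adjoint partner; this is where the strength of the assumption (equivalence, not merely essential surjectivity or fully faithfulness alone) is used. The subsequent inspection of hom-sets in the (co)slice categories is routine, and the balanced hypothesis then closes the argument.
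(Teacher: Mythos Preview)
Your proof is correct and takes a genuinely different route from the paper's. The paper works hands-on with the chosen quasi-inverses $G$ and $H$: using the unit of the adjunction $\overline{\sigma} \dashv \tau^{-1}(\sigma)$ it builds a natural transformation $G \to \overline{\sigma}$, then feeds the resulting maps into the universal property of the pullback to exhibit each $p^{*}_{g_{1}}(\sigma)$ as a split epimorphism; specializing to $g_{1} = Id_{F(X)}$ shows that $\sigma$ itself is a split epimorphism. The dual pushout argument on the coslice side makes $\sigma$ a split monomorphism, and the author then invokes the balanced hypothesis. Your argument is more conceptual: you note once that the adjoint partner of an equivalence is itself an equivalence, so $\overline{\sigma}$ and $\tilde{\sigma}$ are fully faithful, and then a one-line hom-set comparison (against the terminal object of the slice, resp.\ the initial object of the coslice) yields that $\sigma$ is a monomorphism, resp.\ an epimorphism. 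Note the curious reversal: the slice side gives you a \emph{mono} while it gives the paper a \emph{split epi}, and dually on the coslice side. Your approach is shorter and avoids the explicit diagram chase; the paper's approach has the incidental advantage that it actually produces \emph{split} conditions, so that the balanced hypothesis is in fact not needed there (split epi $+$ split mono implies isomorphism in any category), although the paper does not remark on this.
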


\begin{proof}

\underline{1}. If $\sigma: X \longrightarrow F(X)$ is an isomorphism, functors $\tau^{-1}(\sigma)$ and $s^{-1}(\sigma)$ are equivalences by Proposition 4.1.

\underline{2.1}. Suppose that $G: \tau^{-1}(X) \longrightarrow \tau^{-1}(F(X))$ is quasi-inverse to $\tau^{-1}(\sigma)$, i.e. there are isomorphisms of functors

\begin{equation}
\phi_{1}: G \circ \tau^{-1}(\sigma) \widetilde{\longrightarrow} Id_{\tau^{-1}(F(X))}
\end{equation}
\begin{equation}
\phi_{2}: \tau^{-1}(\sigma) \circ G \widetilde{\longrightarrow} Id_{\tau^{-1}(X)}
\end{equation}

Since $\tau^{-1}(\sigma)$ is right adjoint to $\overline{\sigma}$, we have a unit morphism

\begin{equation}
\phi_{3}: Id_{\tau^{-1}(X)} \longrightarrow \tau^{-1}(\sigma) \circ \overline{\sigma}
\end{equation}

From (4.8) follows that there is a morphism $G \overset{\phi_{3}'}{\xrightarrow{\hspace*{1cm}}} G \circ \tau^{-1}(\sigma) \circ \overline{\sigma}$, and from (4.6) we have a morphism $G \circ \tau^{-1}(\sigma) \circ \overline{\sigma} \overset{\phi_{1}'}{\xrightarrow{\hspace*{1cm}}} \overline{\sigma}$. Hence, the morphism from $G$ to $\overline{\sigma}$ is defined, and we denote it as
\begin{equation}
\phi_{31}: G \longrightarrow \overline{\sigma}
\end{equation}

Now, suppose that $H: s^{-1}(F(X)) \longrightarrow s^{-1}(X)$ is a quasi-inverse to $s^{-1}(\sigma)$. We have two isomorphisms of functors

\begin{equation}
\phi_{4}: H \circ s^{-1}(\sigma) \widetilde{\longrightarrow} Id_{s^{-1}(X)}
\end{equation}

\begin{equation}
\phi_{5}: s^{-1}(\sigma) \circ H \widetilde{\longrightarrow} Id_{s^{-1}(F(X))}
\end{equation}

Since $s^{-1}(\sigma)$ is left adjoint to $\tilde{\sigma}$, there is a counit morphism
\begin{equation}
\phi_{6}: s^{-1}(\sigma) \circ \tilde{\sigma} \widetilde{\longrightarrow} Id_{s^{-1}(F(X))}
\end{equation}

From (4.10) follows that the morphism $\tilde{\sigma} \overset{\phi_{4}'^{-1}}{\xrightarrow{\hspace*{1cm}}} H \circ s^{-1}(\sigma) \circ \tilde{\sigma}$ is defined. Using (4.12), we build the morphism $H \circ s^{-1}(\sigma) \circ \tilde{\sigma} \overset{\phi_{6}'}{\xrightarrow{\hspace*{1cm}}} H$. Finally, there is the morphism of functors

\begin{equation}
\phi_{46}: \tilde{\sigma} \longrightarrow H
\end{equation}

\underline{2.2}. Since $G$ is quasi-inverse to $\tau^{-1}(\sigma)$, there is an isomorphism for arbitrary $g_{1}: Y_{1} \longrightarrow F(X) \in \mc{C}/F(X)$

\begin{equation}
 G \circ \tau^{-1}(\sigma) (Y_{1} \overset{g_1}{\longrightarrow} F(X)) \widetilde{\longrightarrow} (Y_{1} \overset{g_1}{\longrightarrow} F(X))
\end{equation}

By definition of $\tau^{-1}(\sigma)$, we have that $\tau^{-1}(\sigma)(Y_{1} \overset{g_1}{\longrightarrow} F(X)) = (X \times_{F(X)} Y_{1} \overset{p^*_{\sigma}(g_{1})}{\xrightarrow{\hspace*{1cm}}} X)$. Then we apply $G$ and denote the resulting image of $g_{1}$ under $G \circ \tau^{-1}(\sigma)$ as $G(X \times_{F(X)} Y_{1}) \overset{G(p^*_{\sigma}(g_{1}))}{\xrightarrow{\hspace*{1cm}}} F(X)$.

From (4.14) we see that there is an isomorphism $\phi$ in $\mc{C}$ such that the triangle

\begin{equation}
\begin{gathered}
\xymatrix@R=3pc@C=3pc{
  G(X \times_{F(X)} Y_{1}) \ar[r]^-{G(p^*_{\sigma}(g_{1}))} \ar[d]_{\phi}  &  F(X) \\
  Y_{1}  \ar[ur]^{g_{1}} & }
\end{gathered}
\end{equation}
is commutative.

Since morphism of functors $\phi_{31}: G \longrightarrow \overline{\sigma}$ is defined (see (4.9)), there is a component of $\phi_{31}$ associated with object $X \times_{F(X)} Y_{1} \overset{p^*_{\sigma}(g_{1})}{\xrightarrow{\hspace*{1cm}}} X$ of $\mc{C}/X$, and, therefore, we have a morphism $\eta$ in $\mc{C}$ that makes the diagram

\begin{equation}
\begin{gathered}
\xymatrix@R=3pc@C=3pc{
  G(X \times_{F(X)} Y_{1}) \ar[d]_{\eta} \ar[rr]^-{G(p^*_{\sigma}(g_{1}))} & & F(X)  \\
  X \times_{F(X)} Y_{1} \ar[r]^-{p^*_{\sigma}(g_{1})} & X \ar[ur]_{\sigma}  &     }
\end{gathered}
\end{equation}
a commutative one.

We're going to exploit universal property of pullback by using morphisms $\phi$ and $\eta$. Indeed, from (4.15) and (4.16) it's clear that there is a commutative diagram

\begin{equation}
\begin{gathered}
\xymatrix@R=3pc@C=3pc{
  G(X \times_{F(X)} Y_{1}) \ar@/^2.5pc/[drr]^{\phi} \ar@/_1pc/[ddr]_-{p^*_{\sigma}(g_{1}) \circ \eta} \ar@{.>}[dr]^-{\Phi} & &   \\
  & X \times_{F(X)} Y_{1} \ar[r]^-{p^*_{g_{1}}({\sigma})} \ar[d]^{p^*_{\sigma}(g_{1})} &  Y_{1} \ar[d]_{g_{1}} \\
  & X \ar[r]^{\sigma} & F(X)                                                                     }
\end{gathered}
\end{equation}
where $\Phi$ is universal morphism. From the right triangle in the diagram (4.17) we get the relation

\begin{center}

$p^*_{g_{1}}({\sigma}) \circ \Phi = \phi$,
\end{center}
where $\phi$ is an isomorphism as follows from (4.15). Hence, $p^*_{g_{1}}({\sigma})$ is a split epimorphism.

Since $g_{1}$ is an arbitrary object of $\mc{C}/F(X)$ we may choose $g_{1} = Id_{F(X)}$ and the diagram (4.17) still commutes. In this case, morphism $p^*_{\sigma}(g_{1})$ is an isomorphism. Then, from the commutativity of the pullback square follows that $\sigma$ is a split epimorphism.

\underline{2.3}. The same arguments apply to the dual construction of under-categories $X/\mc{C}$ and $F(X)/\mc{C}$. We only give the diagram that follows from the universal property of pushout

\begin{equation}
\begin{gathered}
\xymatrix@R=3pc@C=3pc{
  & & H(X_{1} \bigsqcup_{X} F(X))  \\
  X_{1} \ar@/^2.5pc/[urr]^{\phi'} \ar[r]^-{p_{*}^{f_{1}}(\sigma)} & X_{1} \bigsqcup_{X} F(X) \ar@{.>}[ur]^{\Psi} &   \\
  X \ar[r]^{\sigma} \ar[u]^{f_{1}} & F(X) \ar[u]^{p_{*}^{\sigma}(f_{1})} \ar@/_2.5pc/[uur]_{\eta' \circ p_{*}^{\sigma}(f_{1})}  &                                                                   }
\end{gathered}
\end{equation}
where isomorphism $\phi'$ is a component of the natural transformation (4.10) and $\eta'$ is a component of the natural transformation (4.13). From the same reasons as in dual construction, $\sigma$ is a split monomorphism.

\vspace{5mm}

Since category $\mc{C}$ is balanced according to the hypothesis, $\sigma$ is an isomorphism.

\end{proof}

\section{Fixed points of a site morphism}

In this section, we examine situation when category $\mc{C}$ is a site\footnote{We note again that what we call a site was originally called a category with pretopology; see the definitions in chapter 3.} and functor $F: \mc{C} \longrightarrow \mc{C}$ is a morphism of sites. Throughout this section we suppose that category $\mc{C}$ has pullbacks, and we also note that $\mc{C}$ is supposed to be small.

Hereinafter, a sheaf on $\mc{C}$ is always a sheaf of abelian groups.

\begin{proposition}
Suppose that $F: \mc{C} \longrightarrow \mc{C}$ is a morphism of sites. If $\mu$ is a sheaf on $\mc{C}$ then $\mu \circ F$ is a sheaf on $\mc{C}$.
\end{proposition}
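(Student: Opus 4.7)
The plan is to verify the sheaf axiom for $\mu \circ F$ directly, by pulling the required equalizer diagram back from the corresponding sheaf diagram for $\mu$ at the image cover. Implicit here is that for a covariant $F: \mc{C} \longrightarrow \mc{C}$, the composite $\mu \circ F$ means $\mu \circ F^{op}: \mc{C}^{op} \longrightarrow Ab$, which is automatically a presheaf; the only content of the proposition is the sheaf (descent) condition.

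First I would fix an arbitrary object $X \in Ob(\mc{C})$ and an arbitrary covering $\{X_i \longrightarrow X\}_{i \in A} \in cov(X)$, and write down the diagram whose exactness has to be verified, namely
\begin{equation*}
(\mu \circ F)(X) \longrightarrow \prod_{i \in A} (\mu \circ F)(X_i) \rightrightarrows \prod_{i,j \in A} (\mu \circ F)(X_i \times_X X_j).
\end{equation*}
By the very definition of $\mu \circ F$, this diagram coincides with
\begin{equation*}
\mu(F(X)) \longrightarrow \prod_{i \in A} \mu(F(X_i)) \rightrightarrows \prod_{i,j \in A} \mu(F(X_i \times_X X_j)).
\end{equation*}

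Next I would invoke the two axioms of a site morphism to rewrite the above diagram in a form where the sheaf property of $\mu$ applies directly. Axiom (i) of Definition 3.3 gives that $\{F(X_i) \longrightarrow F(X)\}_{i \in A}$ lies in $cov(F(X))$, and axiom (ii) gives canonical isomorphisms $F(X_i \times_X X_j) \overset{\sim}{\longrightarrow} F(X_i) \times_{F(X)} F(X_j)$ which, upon applying $\mu$, yield isomorphisms $\mu(F(X_i \times_X X_j)) \overset{\sim}{\longrightarrow} \mu(F(X_i) \times_{F(X)} F(X_j))$. These isomorphisms are compatible with the two parallel maps in the equalizer diagram because they are induced by the universal morphisms into the pullback, and both restrictions are defined via $\mu$ applied to the respective projections.

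Finally I would conclude: after making the identifications above, the diagram in question becomes precisely the sheaf equalizer diagram for $\mu$ with respect to the covering $\{F(X_i) \longrightarrow F(X)\}_{i \in A} \in cov(F(X))$, which is exact since $\mu$ is a sheaf. Hence $\mu \circ F$ satisfies the sheaf condition for every cover of every object, so it is a sheaf on $\mc{C}$. The only mild subtlety, and the step worth being careful about, is checking that the identifications from axiom (ii) intertwine the two restriction morphisms correctly; this is routine from naturality of the universal property of pullbacks, so there is no serious obstacle in the argument.
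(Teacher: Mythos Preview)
Your proposal is correct and follows essentially the same argument as the paper: both reduce the sheaf condition for $\mu \circ F$ to the sheaf condition for $\mu$ at the image cover $\{F(X_i) \to F(X)\}$ by invoking axiom (i) for the cover and axiom (ii) for the pullback identifications. If anything, you are slightly more explicit than the paper in flagging the compatibility of the pullback isomorphisms with the two restriction maps, which the paper leaves implicit when it ``composes on the right'' with the product of isomorphisms.
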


\begin{proof}
It's clear that if $\nu$ is a presheaf on $\mc{C}$ then so is $\nu \circ F$. Since $\mu$ is a sheaf, the sequence

$$\mu(X) \longrightarrow \prod_{i \in A}\mu(X_{i}) \rightrightarrows \prod_{i,j \in A}\mu(X_{i} \times_{X} X_{j})$$
is an equalizer for all $X \in Ob(\mc{C})$ and all sets of indexes $A$ such that $\{X_{i} \longrightarrow X\}_{i \in A} \in cov(X)$. We should verify the same property for every sequence
\begin{equation}
\mu \circ F(X) \longrightarrow \prod_{i \in A}\mu \circ F(X_{i}) \rightrightarrows \prod_{i,j \in A}\mu \circ F(X_{i} \times_{X} X_{j})
\end{equation}

Since in the category of abelian groups direct product of isomorphisms is an isomorphism, and functor $F$, being a morphism of sites, commutes with pullbacks, we have an isomorphism

\begin{equation}
\prod_{i,j \in A}\mu \circ F(X_{i} \times_{X} X_{j}) \widetilde{\longrightarrow} \prod_{i,j \in A}\mu(F(X_{i}) \times_{F(X)} F(X_{j})).
\end{equation}

Composing sequence (5.1) on the right with (5.2), we get the sequence

\begin{equation}
\mu \circ F(X) \longrightarrow \prod_{i \in A}\mu \circ F(X_{i}) \rightrightarrows \prod_{i,j \in A}\mu(F(X_{i}) \times_{F(X)} F(X_{j})),
\end{equation}
which is an equalizer for all $X \in \mc{C}$ and all sets of indexes $A$. Indeed, $\mu$ is a sheaf and $F(\mc{U}) \in cov(F(X))$ for all $\mc{U} \in cov(X)$. Since (5.3) is an equalizer, the sequence (5.1) is always an equalizer.
\end{proof}

We define functor $F^{*}: Sh(\mc{C}) \longrightarrow Sh(\mc{C})$ by the pre-composition

$$\mu \mapsto \mu \circ F.$$

\begin{proposition}
Functor $F^{*}: Sh(\mc{C}) \longrightarrow Sh(\mc{C})$ is exact.
\end{proposition}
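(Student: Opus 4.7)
The plan is to verify exactness of $F^*$ by checking additivity together with preservation of kernels and cokernels. The guiding observation is that at the level of presheaves, the precomposition functor $\mu \mapsto \mu \circ F$ is automatically exact: all (co)limits of presheaves are formed objectwise, and evaluation at the fixed object $F(X)$ is exact in $Ab$. Thus only the interaction with sheafification requires serious attention.

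Additivity is immediate, since the sum of natural transformations is defined componentwise: $(F^*(f+g))_X = (f+g)_{F(X)} = f_{F(X)} + g_{F(X)} = (F^*f)_X + (F^*g)_X$. For kernels, the kernel of a morphism of sheaves coincides with its presheaf kernel (which is already a sheaf), so $(F^*(\ker f))(X) = (\ker f)(F(X)) = \ker(f_{F(X)}) = \ker((F^*f)_X)$. This gives left exactness.

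The substantive step is preservation of cokernels. In $Sh(\mc{C})$ the cokernel of a morphism $f$ equals the sheafification $(\mathrm{coker}^{pre}(f))^{+}$ of its presheaf cokernel, and since cokernels of presheaves are objectwise, $\mathrm{coker}^{pre}(F^*f) = F^*(\mathrm{coker}^{pre}(f))$. It therefore suffices to exhibit a natural isomorphism $F^*(P^{+}) \simeq (F^*P)^{+}$ for an arbitrary presheaf $P$. I would prove this via the \v{C}ech plus-construction $P^{+}(X) = \varinjlim_{\mc{U} \in cov(X)} \check{H}^0(\mc{U}, P)$ (iterated twice to obtain a sheaf), working directly at the level of these colimits. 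The morphism-of-sites axioms enter decisively: axiom (ii) of Definition 3.3 identifies $\check{H}^0(\mc{U}, F^*P)$ with $\check{H}^0(F(\mc{U}), P)$ through the canonical isomorphism $F(X_i \times_X X_j) \simeq F(X_i) \times_{F(X)} F(X_j)$, while axiom (i) ensures $F(\mc{U}) \in cov(F(X))$. The main obstacle is the comparison between the colimit indexed by $\mc{U} \in cov(X)$ (taken at the transported covers $F(\mc{U})$) and the colimit indexed by all of $cov(F(X))$: this is the technical core, and it is precisely here that the morphism-of-sites structure must be exploited most carefully. Once this natural isomorphism is in place, preservation of cokernels and hence exactness of $F^*$ follow at once.
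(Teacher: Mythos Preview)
Your decomposition into additivity, left exactness (via presheaf kernels), and right exactness (via commutation of $F^*$ with sheafification) is a far more structured approach than the paper's. The paper simply asserts that exactness ``follows immediately, since $\mu \circ F$ is nothing else but the restriction of $\mu$ to the subcategory of $\mc{C}$,'' without ever engaging with the distinction between presheaf exactness and sheaf exactness that you correctly isolate as the crux.

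That said, the step you yourself label ``the technical core'' --- identifying the colimit over $\mc{U}\in cov(X)$ of $\check{H}^0(F(\mc{U}),P)$ with the full colimit over $cov(F(X))$ --- is a genuine gap, not a routine verification left to the reader. Definition~3.3 requires only that $F$ send covers to covers and commute with pullbacks; it imposes no condition guaranteeing that every cover of $F(X)$ is refined by one of the form $F(\mc{U})$ with $\mc{U}\in cov(X)$. Without such cofinality the two colimits can differ and the isomorphism $F^*(P^{+})\simeq (F^*P)^{+}$ can fail. Concretely, for ordinary topological spaces the functor $\mu\mapsto \mu\circ f^{-1}$ is precisely the direct image $f_*$, which is left exact but in general not right exact --- its failure to be exact is measured by the higher direct images $R^i f_*$. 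So the ``morphism-of-sites structure'' you intend to exploit ``most carefully'' at this point is, in the form given by Definition~3.3, simply not strong enough to close the argument; an additional hypothesis on $F$ (e.g.\ that the assignment $\mc{U}\mapsto F(\mc{U})$ is cofinal in $cov(F(X))$, or that $F$ induces an equivalence of sites) would be needed. The paper's one-line appeal to ``restriction to a subcategory'' does not confront this issue either, but that does not rescue your outline: the place where your argument stalls is exactly where the claimed right exactness would have to be earned.
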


\begin{proof}
Functor $F$ between additive categories is additive if $F(0) \simeq 0$ and the canonical map $F(X \oplus Y) \longrightarrow F(X) \oplus F(Y)$ is an isomorphism. It's clear that $F^{*}$ is the additive one; we have equalities $\mu \circ F = 0$, if $\mu = 0$, and $(\mu \oplus \nu) \circ F =  \mu \circ F \oplus \nu \circ F$. Suppose that the sequence of sheaves

$$0 \longrightarrow \mu_{1} \overset{f_{1}}{\longrightarrow} \mu_{2} \overset{f_{2}}{\longrightarrow} \mu_{3} \longrightarrow 0$$
is exact. Then, we have to show that the sequence of sheaves

$$0 \longrightarrow \mu_{1} \circ F \overset{F^{*}(f_{1})}{\xrightarrow{\hspace*{1cm}}} \mu_{2} \circ F \overset{F^{*}(f_{2})}{\xrightarrow{\hspace*{1cm}}} \mu_{3} \circ F \longrightarrow 0$$
is also exact. But this follows immediately, since $\mu \circ F$ is nothing else but the restriction of $\mu$ to the subcategory of $\mc{C}$.
\end{proof}

Recall that a sheaf $\mu$ on $\mc{C}$ is called flabby if

$$H^{n}(\mc{U}, \mu) \simeq 0,$$
for all $\mc{U} \in cov(X)$, $X \in \mc{C}$, $n > 0$. Here, $H^{n}(\mc{U}, \mu)$ is given by $R^{n}H^{0}(\mc{U}, \mu)$, where

$$H^{0}(\mc{U}, \mu) = ker(\prod_{i}\mu(X_{i}) \rightrightarrows \prod_{i,j}\mu(X_{i} \times_{X} X_{j})).$$

Every injective sheaf is flabby, but the converse doesn't hold in general.

\begin{remark}
Groups $H^{n}(\mc{U}, \mu)$, where $\mc{U} = (X_{i} \longrightarrow X)_{i \in A} \in cov(X)$, may be computed as cohomology groups of the Cech complex $C^{\bullet}(\mc{U}, \mu)$ (see \cite{t}). By definition,

$$C^{q}(\mc{U}, \mu) = \prod_{(i_{0},...,i_{q}) \in A^{q+1}}\mu(X_{i_{0}} \times_{X} ... \times_{X} X_{i_{q}}),$$
while differential $d^{q}:  C^{q}(\mc{U}, \mu) \longrightarrow C^{q+1}(\mc{U}, \mu)$ is defined by

$$(d^{q}s)_{i_{0},...,i_{q+1}} = \sum_{k=0}^{q+1}(-1)^{k}\mu(\hat{k})(s_{i_{0},...,\hat{i}_{k},...,i_{q+1}}),$$
where $\hat{k}$ is projection

$$\hat{k} = X_{i_{0}} \times_{X} ... \times_{X} X_{i_{q+1}} \longrightarrow X_{i_{0}} \times_{X} ... \times_{X} \hat{X}_{i_{k}} \times_{X} ... \times_{X} X_{i_{q+1}}$$
\end{remark}

\begin{proposition}
If $\mu \in Sh(\mc{C})$ is a flabby sheaf, then $\mu \circ F$ is also a flabby sheaf.
\end{proposition}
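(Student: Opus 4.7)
The plan is to compute $H^n(\mc{U}, \mu \circ F)$ for an arbitrary $\mc{U} = \{X_i \to X\}_{i \in A} \in cov(X)$ using the Čech description recalled in Remark~5.3, and to identify this complex with the Čech complex of $\mu$ with respect to $F(\mc{U})$. Since $F$ is a morphism of sites, $F(\mc{U}) \in cov(F(X))$, and then flabbiness of $\mu$ furnishes the vanishing.

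Concretely, I would first write
$$C^{q}(\mc{U}, \mu \circ F) = \prod_{(i_{0},\ldots,i_{q})} \mu\bigl(F(X_{i_{0}} \times_{X} \cdots \times_{X} X_{i_{q}})\bigr).$$
The second axiom for a site morphism gives that $F$ commutes with pullbacks in two factors; iterating this (using that an iterated pullback is a pullback and invoking the universal property, or alternatively a pasting argument) produces a canonical isomorphism
$$F(X_{i_{0}} \times_{X} \cdots \times_{X} X_{i_{q}}) \widetilde{\longrightarrow} F(X_{i_{0}}) \times_{F(X)} \cdots \times_{F(X)} F(X_{i_{q}}).$$
Applying $\mu$ and taking products (which is exact in $Ab$), one obtains an isomorphism of abelian groups
$$\theta^{q} \colon C^{q}(\mc{U}, \mu \circ F) \widetilde{\longrightarrow} C^{q}(F(\mc{U}), \mu).$$

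Next I would verify that the $\theta^{q}$ assemble into a morphism of complexes, i.e.\ that they intertwine the Čech differentials. The differential in $C^{\bullet}(\mc{U}, \mu \circ F)$ involves $\mu \circ F(\hat{k})$, where $\hat{k}$ is the projection omitting the $k$-th factor among the $X_{i_{j}}$; under the canonical iterated-pullback isomorphism, $F(\hat{k})$ is identified with the corresponding projection among the $F(X_{i_{j}})$, simply because both pullback legs are extracted by the universal property. Hence $\theta^{\bullet}$ is an isomorphism of the Čech complexes, and therefore $H^{n}(\mc{U}, \mu \circ F) \simeq H^{n}(F(\mc{U}), \mu)$ for every $n$.

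Finally, since $F$ is a site morphism, $F(\mc{U}) \in cov(F(X))$, and since $\mu$ is flabby, $H^{n}(F(\mc{U}), \mu) = 0$ for all $n > 0$. This gives $H^{n}(\mc{U}, \mu \circ F) = 0$ for all $n > 0$, all $X \in \mc{C}$, and all $\mc{U} \in cov(X)$, so $\mu \circ F$ is flabby. The main technical point — and the only real obstacle — is the bookkeeping that turns the pairwise pullback-preservation into the multi-factor statement and the check that the face maps are intertwined; neither step is hard, just notational.
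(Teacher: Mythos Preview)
Your proof is correct and follows essentially the same route as the paper: construct an isomorphism of \v{C}ech complexes $C^{\bullet}(\mc{U},\mu\circ F)\simeq C^{\bullet}(F(\mc{U}),\mu)$ using that $F$ preserves pullbacks, check it intertwines the differentials, and then invoke flabbiness of $\mu$ on the covering $F(\mc{U})\in cov(F(X))$. If anything, you are slightly more explicit than the paper about the passage from two-factor to multi-factor pullback preservation and about why the face maps match up.
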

\begin{proof}
For every $q > 0$ there is an isomorphism

$$\phi_{q}: C^{q}(\mc{U}, \mu \circ F) \widetilde{\longrightarrow} C^{q}(\mc{V}, \mu),$$
where $\mc{V} = F(\mc{U})$. Indeed, functor $F$ is a site morphism; therefore, it commutes with pullbacks and $F(\mc{U}) \in cov(F(X))$ for all $\mc{U} \in cov(X)$. For each $i > 0$ the diagram

\begin{equation*}
\begin{gathered}
\xymatrix@R=3pc@C=3pc{
  C^{i}(\mc{U}, \mu \circ F) \ar[d]_{\phi_{i}} \ar[r]^-{d^{i}} & C^{i+1}(\mc{U}, \mu \circ F) \ar[d]_{\phi_{i+1}} \\
  C^{i}(\mc{V}, \mu) \ar[r]^-{d^{i}} & C^{i+1}(\mc{V}, \mu)    }
\end{gathered}
\end{equation*}

is a commutative one. Hence,

$$H^{n}(\mc{U}, \mu \circ F) \simeq H^{n}(\mc{V}, \mu),$$ for all $n >0$.

From this isomorphism follows that if $\mu$ is a flabby sheaf then so is $\mu \circ F$.

\end{proof}

\begin{proposition}
Suppose that $(X,\alpha)$ is a fixed point of F, $\mu$ a sheaf on $\mc{C}$. For all $n$, there is an isomorphism

$$H^{n}(X, \mu) \simeq H^{n}(X, \mu \circ F).$$
\end{proposition}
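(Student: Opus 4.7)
The plan is to compute $H^n(X, \mu \circ F)$ via the exact functor $F^*$ applied to a flabby resolution of $\mu$, and then use the isomorphism $\alpha: X \widetilde{\longrightarrow} F(X)$ to identify the resulting complex of sections with the one computing $H^n(X, \mu)$.

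First, I would choose a flabby resolution $0 \longrightarrow \mu \longrightarrow I^{\bullet}$ in $Sh(\mc{C})$ (e.g.\ any injective resolution works, since injective sheaves are flabby). Applying $F^{*}$, Proposition 5.2 guarantees that the resulting sequence $0 \longrightarrow F^{*}\mu \longrightarrow F^{*}I^{\bullet}$ is still exact, and Proposition 5.4 guarantees that each $F^{*}I^{k} = I^{k} \circ F$ is again flabby. Using the standard fact that flabby sheaves on a site are acyclic for the section functor $\Gamma(X,-) = (-)(X)$, we may therefore compute
\begin{equation*}
H^{n}(X, \mu \circ F) \;\simeq\; H^{n}\bigl((F^{*}I^{\bullet})(X)\bigr) \;=\; H^{n}\bigl(I^{\bullet}(F(X))\bigr),
\end{equation*}
and likewise $H^{n}(X, \mu) \simeq H^{n}(I^{\bullet}(X))$.

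Next I would use that $\alpha: X \widetilde{\longrightarrow} F(X)$ is an isomorphism. For each $k$, the presheaf (sheaf) $I^{k}$ sends $\alpha$ to an isomorphism $I^{k}(\alpha): I^{k}(F(X)) \widetilde{\longrightarrow} I^{k}(X)$. Because each differential $d^{k}: I^{k} \longrightarrow I^{k+1}$ is a morphism of sheaves, the squares
\begin{equation*}
\xymatrix@R=2.4pc@C=3pc{
I^{k}(F(X)) \ar[r]^-{d^{k}} \ar[d]_-{I^{k}(\alpha)} & I^{k+1}(F(X)) \ar[d]^-{I^{k+1}(\alpha)} \\
I^{k}(X) \ar[r]^-{d^{k}} & I^{k+1}(X)
}
\end{equation*}
commute, so the collection $\{I^{k}(\alpha)\}$ defines an isomorphism of cochain complexes $I^{\bullet}(F(X)) \widetilde{\longrightarrow} I^{\bullet}(X)$. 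Passing to cohomology gives the desired isomorphism $H^{n}(X, \mu \circ F) \simeq H^{n}(X, \mu)$ for all $n$.

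The only mildly subtle point is the appeal to the fact that flabby sheaves (in the sense of the definition given just before Remark 5.3) are acyclic for the functor $\Gamma(X, -)$, so that flabby resolutions can be used in place of injective ones; this is standard, but is the one ingredient not proved explicitly in the excerpt. Everything else is formal: exactness and flabbiness preservation for $F^{*}$ are Propositions 5.2 and 5.4, and the final identification of complexes is a direct consequence of the functoriality of $I^{\bullet}$ applied to the isomorphism $\alpha$.
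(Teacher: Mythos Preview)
Your proof is correct and follows essentially the same route as the paper: take an injective resolution of $\mu$, apply $F^{*}$ and invoke Propositions 5.2 and 5.4 to obtain a flabby resolution of $\mu\circ F$, then use the isomorphism $\alpha$ to identify the two complexes of sections termwise via naturality of the differentials. The paper makes the same implicit appeal to acyclicity of flabby sheaves for $\Gamma(X,\cdot)$ that you flag explicitly.
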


\begin{proof}

Let $\mu^{\bullet}$ be an injective resolution of $\mu$. From Proposition 5.2 and Proposition 5.4 follows that $\mu^{\bullet} \circ F$ is a flabby resolution of $\mu \circ F$

$$\mu^{\bullet} = 0 \longrightarrow \mu_{1} \overset{f_{1}}{\longrightarrow} \mu_{2} \overset{f_{2}}{\longrightarrow} ...$$

$$\mu^{\bullet} \circ F = 0 \longrightarrow \mu_{1} \circ F \overset{F^{*}f_{1}}{\xrightarrow{\hspace*{1cm}}} \mu_{2} \circ F \overset{F^{*}f_{2}}{\xrightarrow{\hspace*{1cm}}} ...$$

Applying left exact functor $\Gamma(X,\cdot)$ to these sequences we get two complexes of abelian groups

$$0 \longrightarrow \mu_{1}(X) \overset{(f_{1})_{X}}{\xrightarrow{\hspace*{1cm}}} \mu_{2}(X) \overset{(f_{2})_{X}}{\xrightarrow{\hspace*{1cm}}} ...$$

$$0 \longrightarrow \mu_{1} \circ F(X) \overset{(F^{*}f_{1})_{X}}{\xrightarrow{\hspace*{1cm}}} \mu_{2} \circ F(X) \overset{(F^{*}f_{2})_{X}}{\xrightarrow{\hspace*{1cm}}} ...$$

Note that $(F^{*}f_{i})_{X} = (f_{i})_{F(X)}$. Since $(X,\alpha)$ is a fixed point and $f_{i}$ is a morphism of sheaves, we have, for each $i$, a commutative diagram

\begin{equation*}
\begin{gathered}
\xymatrix@R=3pc@C=3pc{
  \mu_{i}(X) \ar[d]_{\mu_{i}(\alpha)} \ar[r]^-{(f_{i})_{X}} & \mu_{i+1}(X) \ar[d]_{\mu_{i+1}(\alpha)} \\
  \mu_{i} \circ F(X) \ar[r]^-{(f_{i})_{F(X)}} & \mu_{i+1} \circ F(X)    }
\end{gathered}
\end{equation*}

Therefore, there is an isomorphism $H^{n}(X, \mu) \simeq H^{n}(X, \mu \circ F)$ for each $n$.

\end{proof}

In chapter 2, we introduced two classes of fixed points using an identity and an isomorphism. Now, we're going to set one more class of fixed points of a site morphism, using a relation that is weaker than an identity and an isomorphism.

\begin{definition}
A pair $(X, \Phi)$, where $X \in \mc{C}$ and $\Phi$ is the set of natural transformations $\Phi^{i}: H^{i}(X, \cdot) \longrightarrow H^{i}(F(X), \cdot), i \geq 0$, is called a cohomological fixed point of a site morphism $F: \mc{C} \longrightarrow \mc{C}$ if $\Phi^{i}$ is a natural isomorphism for all $i \geq 0$.
\end{definition}

\begin{proposition}
\begin{enumerate}[label=(\roman*)]
\item If $(X, \Phi)$ is a cohomological fixed point of $F: \mc{C} \longrightarrow \mc{C}$ then
$$H^{n}(X, \mu) \simeq H^{n}(X, \mu \circ F),$$
for all $\mu \in Sh(\mc{C}), n \geq 0$.

\item If for some $X \in \mc{C}$ there is an isomorphism $H^{n}(X, \mu) \simeq H^{n}(X, \mu \circ F)$, natural in $\mu$ for all $\mu \in Sh(\mc{C})$ and for all $n \geq 0$, then there is a cohomological fixed point of $F$.
\end{enumerate}
\end{proposition}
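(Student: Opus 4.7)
The plan is to isolate a single natural isomorphism
$$H^n(X, \mu \circ F) \simeq H^n(F(X), \mu), \quad n \geq 0, \ \mu \in Sh(\mc{C}),$$
and use it as a bridge between the two sides of each implication. Once this bridge is in place, the proposition reduces to a pair of two-step compositions.

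First I would prove the bridge. Choose an injective resolution $\mu^\bullet$ of $\mu$. By Proposition 5.2, $F^*$ is exact, and since every injective sheaf is flabby, Proposition 5.4 tells us that each $\mu^i \circ F$ is flabby. Hence $\mu^\bullet \circ F$ is a flabby---in particular $\Gamma(X, \cdot)$-acyclic---resolution of $\mu \circ F$, so the cohomology $H^n(X, \mu \circ F)$ is computed by the complex $(\mu^\bullet \circ F)(X)$. But $(\mu^i \circ F)(X) = \mu^i(F(X))$ on the nose, so this is literally the complex $\mu^\bullet(F(X))$, which computes $H^n(F(X), \mu)$ via the flabby resolution $\mu^\bullet$. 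Naturality in $\mu$ follows because a morphism $\mu \to \nu$ extends to a morphism of chosen injective resolutions, and the identification above is an equality rather than an isomorphism, so no additional coherence data needs to be tracked.

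With the bridge in hand, part (i) is immediate. For a cohomological fixed point $(X, \Phi)$ the natural isomorphism $\Phi^n_\mu: H^n(X, \mu) \simeq H^n(F(X), \mu)$ followed by the inverse of the bridge gives the required natural isomorphism $H^n(X, \mu) \simeq H^n(X, \mu \circ F)$. For part (ii), the hypothesis supplies natural isomorphisms $\psi^n_\mu: H^n(X, \mu) \simeq H^n(X, \mu \circ F)$; post-composing each $\psi^n$ with the bridge produces a natural isomorphism $\Phi^n: H^n(X, \cdot) \simeq H^n(F(X), \cdot)$, so the pair $(X, \{\Phi^n\}_{n \geq 0})$ is a cohomological fixed point in the sense of Definition 5.6.

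The only non-trivial step is the bridge; it is essentially the Grothendieck spectral sequence collapse for the composition $\Gamma(X, \cdot) \circ F^*$, coming from the fact that $F^*$ is exact (Proposition 5.2) and sends injectives to $\Gamma(X, \cdot)$-acyclics (Proposition 5.4). I expect the main obstacle to be making the naturality-in-$\mu$ claim airtight, since one must check that the identification is independent, up to canonical isomorphism, of the chosen injective resolution; this is standard homological algebra and parallels the argument in Proposition 5.5, the crucial difference being that here no use is made of an isomorphism $X \simeq F(X)$, because the identity $(\mu^i \circ F)(X) = \mu^i(F(X))$ is definitional.
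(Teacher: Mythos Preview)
Your proposal is correct and follows essentially the same approach as the paper: both arguments hinge on the ``bridge'' isomorphism $H^n(X,\mu\circ F)\simeq H^n(F(X),\mu)$, obtained from Propositions~5.2 and~5.4 by noting that $\mu^\bullet\circ F$ is a flabby resolution of $\mu\circ F$ whose sections at $X$ coincide with those of $\mu^\bullet$ at $F(X)$, and then compose this with the hypothesis in each direction. Your write-up is somewhat more explicit about naturality and the acyclic-resolution argument, but the strategy and the ingredients are identical to the paper's.
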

\begin{proof}
(i). By hypothesis, for each $i \geq 0$ there is an isomorphism $H^{i}(X, \mu) \widetilde{\longrightarrow} H^{i}(F(X), \mu)$, natural in $\mu$. But from Proposition 5.2 and Proposition 5.4 follows that $H^{i}(X, \mu \circ F) \widetilde{\longrightarrow} H^{i}(F(X), \mu)$, for all $i \geq 0$.

(ii). Again there is an isomorphism $H^{i}(X, \mu \circ F) \widetilde{\longrightarrow} H^{i}(F(X), \mu)$, natural in $\mu$ for each $i \geq 0$, as follows from Proposition 5.2 and Proposition 5.4. Since $H^{n}(X, \mu) \simeq H^{n}(X, \mu \circ F)$, for each $i \geq 0$, by the hypothesis, there is a family $\Phi$ of isomorphisms $\phi_{i}: H^{i}(X, \mu) \widetilde{\longrightarrow} H^{i}(F(X), \mu)$, for every $i \geq 0, \mu \in Sh(\mc{C})$, that are natural in $\mu$. Therefore, we have a cohomological fixed point of $F$, namely $(X,\Phi)$.

\end{proof}

\newpage

\end{document}